\newtheorem{theorem}{Theorem}[section]
\newtheorem{lemma}[theorem]{Lemma}
\newtheorem{proposition}[theorem]{Proposition}
\theoremstyle{definition}
\newtheorem*{example*}{Example}
\theoremstyle{remark}
\newtheorem{remark}[theorem]{Remark}
\newtheorem*{remark*}{Remark}
\numberwithin{equation}{section}
\newcommand{\la}{\label}
\DeclareMathOperator{\Out}{\sf Out}
\DeclareMathOperator{\Pic}{\sf Pic}
\DeclareMathOperator{\End}{\sf End}
\DeclareMathOperator{\Aut}{\sf Aut}
\DeclareMathOperator{\Frac}{\sf Frac}
\DeclareMathOperator{\gr}{gr}
\DeclareMathOperator{\rk}{rk}
\DeclareMathOperator{\Hilb}{\sf Hilb}
\DeclareMathOperator{\id}{Id}
\DeclareMathOperator{\len}{length}
\DeclareMathOperator{\ol}{1-length}
\DeclareMathOperator{\Der}{Der}
\DeclareMathOperator{\Div}{Div}
\DeclareMathOperator{\Hom}{Hom}
\def\O{\mathcal{O}}
\def\R{\mathcal{R}}
\def\C{\mathcal{C}}
\def\K{\mathbb{K}}
\def\F{\mathcal{F}}
\def\C{\mathcal{C}}
\def\A{\mathbb{A}}
\def\N{\mathbb{N}}
\def\Z{\mathbb{Z}}
\def\D{\mathcal{D}}
\def\c{\mathbb{C}}
\def\I{\mathscr{I}}
\def\DO{\overline{\mathcal{D}}}
\def\MO{\overline{M}}
\def\AO{\overline{A}}
\def\RO{\overline{\mathcal{R}}}
\def\gammaO{\overline{\gamma}}
\def\m{\mathfrak{m}}
\begin{document}
\title[Ideal classes of differential operators]
{Differential operators on an affine curve: ideal classes and Picard groups}

%
    \author{Yuri Berest}
%
%
    \address{Department of Mathematics, Cornell University,
        Ithaca, NY 14853, USA}
    \email{berest@math.cornell.edu}
%
     \author{George Wilson}
     \address{Mathematical Institute, 24--29 St Giles, Oxford OX1 3LB, UK}
      \email{wilsong@maths.ox.ac.uk}
%
\begin{abstract}
Let $\, X \,$ be a smooth complex affine curve, and let $\, \R \,$ be
the space of right ideal classes in the ring $\, \D \,$ of
differential operators on $\, X \,$.  We introduce and study a fibration
$\, \gamma : \R \to \Pic X \,$.  We relate this fibration to the
corresponding one in the classical limit, and derive an integer invariant
$\, n \,$ which indexes the decomposition of the fibres of $\, \gamma \,$
into Calogero-Moser spaces (see \cite{BC}).
We also study the action of the group
$\, \Pic \D \,$ on our fibration; and we explain how to define $\, \gamma \,$
in the framework of the Grassmannian description of $\, \R \,$ due to
Cannings and Holland.
\end{abstract}
\maketitle
\section{Introduction}

Let $\, X \,$ be an irreducible smooth affine algebraic curve over $\, \c \,$,
and let $\, \D \,$ be the ring of differential operators
on $\, \O(X) \,$.  We denote by $\, \R \,$ the set of isomorphism classes
of right ideals in $\, \D \,$ (equivalently: of finitely generated
torsion-free right $\, \D$-modules of rank $1$).  In the case
$\, X = \A^1 $, where $\, \D \,$ is the Weyl algebra,
$\, \R \,$ can be described in two essentially different ways
(i) as the {\it adelic Grassmannian} of \cite{W1} (see \cite{CH1});
(ii) as the disjoint union of certain finite-dimensional algebraic
varieties, the {\it Calogero-Moser spaces} $\, \C_n $ (see \cite{BW1},
\cite{BW2}).
The general case is similar (see \cite{BC}, \cite{Be}, \cite{W3}, \cite{BN});
however, some new features come into play.
In this preliminary paper we make a fairly thorough
study of one of them, which comes from the fact
that if $\, X \,$ has genus $>0$ then the Picard group $\, \Pic X \,$
is nontrivial.  We recall that $\, \Pic X \,$ is the group of algebraic
line bundles (or divisor classes) over $\, X \,$; alternatively,
we may think of it as the group of ideal classes in the Dedekind domain
$\, \O(X) \,$.  We shall see that there is a natural fibration
\begin{equation*}
\label{gam}
\gamma \,:\, \R \to \Pic X \ .
\end{equation*}
The significance of this fibration is that it is the individual fibres
of $\, \gamma \,$ (rather than the whole of $\, \R \,$) that have
descriptions generalizing those already known in the case $\, X = \A^1 \,$.
Of course, in that case $\, \Pic X \,$ is trivial, so there is only
one fibre.

The definition of $\, \gamma \,$ is via $K$-theory: if we wish
to classify the ideals in $\, \D \,$ up to isomorphism, we should
certainly understand first how to classify them up to {\it stable}
isomorphism.  Let $\, K_0(\D) \,$ be the Grothendieck group of finitely
generated projective right $\, \D$-modules: since $\, \D \,$ is
Noetherian and hereditary, every ideal $\, M \subseteq \D \,$ is finitely
generated and projective, hence determines a class $\, [M] \in  K_0(\D) \,$.
The inclusion of $\, \O(X) \,$ in $\, \D \,$ as the
operators of order zero induces a map from $\, K_0(X) \,$
($\, \equiv K_0(\O(X)) \,$)
to $\, K_0(\D) \,$; a theorem of Quillen
(see \cite{Q}, p.~120, Theorem~7) shows that this map
is an {\it isomorphism}.  On the other hand, it is well known that the map
$$
\rk \oplus \det \,:\, K_0(X) \to \Z \oplus \Pic X
$$
is also an isomorphism.  Since $\, \rk M = 1 \,$, it follows that
the stable isomorphism
class of $\, M \,$ is determined by its component in $\, \Pic X \,$:
we define $\, \gamma(M) \,$ to be this component.  That is, if $\, M \,$
is a right ideal in $\, \D \,$, then $\, \gamma(M) \, ${\it is the
unique ideal class $\, (I) \in \Pic X \,$ such that $\, M \,$ is
stably isomorphic to} $\, I\D \,$.

Now, as for any algebra $\, \D \,$,
the group $\, \Pic \D \,$ of invertible $\, \D$-bimodules acts
(compatibly) on  modules and on $\, K_0(\D) \,$  by tensor product.
Thus we have natural actions of $\, \Pic \D \,$ on $\, \R \,$ and on
$\, \Pic X \,$, and the map $\, \gamma \,$ is equivariant with respect to
these actions.  The following fact
perhaps justifies referring to $\, \gamma \,$ as a ``fibration''.
\begin{theorem}
\label{T1}
The action of $\, \Pic \D \,$ on $\, \Pic X \,$ is transitive.  Thus
for any two fibres $\, F_1 \,$ and $\, F_2 \,$ of $\, \gamma \,$ there
are elements of the group $\, \Pic \D \,$ that map $\, F_1 \,$
bijectively onto $\, F_2 \,$.
\end{theorem}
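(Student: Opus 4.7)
The second claim follows formally from the first by equivariance of $\gamma$: any $P \in \Pic\D$ taking $\gamma(F_1) \in \Pic X$ to $\gamma(F_2)$ induces the bijection $M \mapsto M \otimes_\D P$ from $F_1$ onto $F_2$. So the task is to prove transitivity of $\Pic\D$ on $\Pic X$. Since $\gamma(\D) = 0$, the orbit of the trivial class is precisely $\{\gamma(P) : P \in \Pic\D\}$, so transitivity is equivalent to surjectivity of the map
$$\Pic\D \,\longrightarrow\, \Pic X, \qquad P \,\longmapsto\, \gamma(P).$$
The plan is therefore to construct, for each $[L] \in \Pic X$, an invertible $\D$-bimodule $P_L$ with $\gamma(P_L) = [L]$.

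The key structural input is that $X$ is a smooth affine curve, so $H^1(X, \Omega^1_X) = 0$. This kills the Atiyah obstruction and permits a choice of algebraic connection $\nabla_L \colon L \to L \otimes_{\O(X)} \Omega^1_X$ on every line bundle $L$; because $\dim X = 1$ the connection $\nabla_L$ is automatically flat, so $L$ becomes a left $\D$-module. With this structure fixed I would set
$$P_L \,:=\, L \otimes_{\O(X)} \D,$$
a right $\D$-module on the second factor, with left $\D$-action defined on derivations by the Leibniz formula
$$\partial \cdot (l \otimes d) \,:=\, \nabla_\partial(l) \otimes d + l \otimes \partial d, \qquad \partial \in \Der\O(X),$$
and extended multiplicatively to all of $\D$.

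Three verifications would remain. First, the formula defines a well-defined action of the (non-commutative) algebra $\D$ commuting with the right $\D$-action: well-definedness over the tensor relation uses the Leibniz rule for $\nabla_L$, and compatibility with the commutator relations $[\partial_1,\partial_2] = \partial_1\partial_2 - \partial_2\partial_1$ in $\D$ is precisely the flatness of $\nabla_L$. This is the main obstacle — but both checks reduce to identities for the connection, which hold on a curve. Second, $P_L$ is invertible with inverse $P_{L^{-1}}$ (using the dual connection on $L^{-1}$): the evaluation $L \otimes_{\O(X)} L^{-1} \to \O(X)$ intertwines the tensor connection with the canonical de Rham differential on $\O(X)$, so it induces a $\D$-bimodule isomorphism $P_L \otimes_\D P_{L^{-1}} \cong L \otimes_{\O(X)} L^{-1} \otimes_{\O(X)} \D \cong \D$. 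Third, as an $\O(X)$-module $P_L \cong L \otimes \D$, so $[P_L] = (1,[L])$ in $K_0(\D) \cong K_0(X) \cong \Z \oplus \Pic X$ and $\gamma(P_L) = [L]$, as required.
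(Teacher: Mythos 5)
Your argument is correct, and it is genuinely different from the one in the paper. The paper proves Theorem~\ref{T1} as a corollary of an explicit formula $\,J.P = \sigma^{-1}(JI)\,$ for the action (Proposition~\ref{expl}), which in turn rests on the Cannings--Holland structure theory for $\Pic\D$: every invertible bimodule is of the form $(\D I)_\varphi$, the assignment $(\D I)_\varphi \mapsto (I,\sigma)$ gives a surjection $\Pic\D \twoheadrightarrow \Pic X \rtimes \Aut\O(X)$ (Lemma~\ref{hom}, quoted from \cite{CH2}), and Makar-Limanov's theorem is invoked to see that $\varphi$ preserves $\O(X) \subset \D$. Transitivity then drops out of surjectivity of that map. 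You instead build an explicit splitting of $\,P \mapsto \gamma(P)\,$: each $L \in \Pic X$ carries a flat algebraic connection because $H^1(X,\Omega^1_X)=0$ (affineness) and $\Omega^2_X = 0$ (dimension one), hence is a left $\D$-module, and $P_L := L \otimes_{\O(X)} \D$ is an invertible bimodule with $\gamma(P_L) = [L]$. This is more self-contained: it avoids both \cite{CH2} and Makar-Limanov, and the verifications you list (well-definedness over the tensor relation via the Leibniz rule, compatibility with commutators via flatness, invertibility via the de~Rham identification $L \otimes L^{-1} \cong \O(X)$) are all routine for smooth affine curves, where $\D$ is generated by $\O(X)$ and $\Der\O(X)$. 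What your approach does not give is the paper's finer information---the explicit formula for the full action of $\Pic\D$ and the identification of the stabilizer of the neutral element with $\Out\D$ (see the Remark after Proposition~\ref{expl})---which the paper reuses in the proof of Theorem~\ref{picth}. As a proof of Theorem~\ref{T1} alone, however, your construction is complete.
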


Next, we wish to compare our situation with the corresponding one in
the ``classical limit''.  Recall that if we give $\, \D \,$ its usual
filtration by order of operators,
then the associated graded algebra $ \, \DO := \gr \D \,$ is canonically
identified with the commutative algebra
$\, \O(T^*X) \,$ of functions on the cotangent bundle of $\, X \,$.
Thus our space $\, \R \,$ is a
noncommutative analogue of the space $\, \RO \,$ of rank $1$ torsion-free
sheaves on $\, T^*X \,$, or (since $\, T^*X \,$ is affine if $\, X \,$ is)
of ideal classes in $\, \DO \,$.
If $\, M \,$ is an ideal in $\, \D \,$, we may give
it the filtration induced by that of $\, \D \,$; the associated
graded module $\, \MO := \gr M \,$
is then a (homogeneous) ideal in $\, \DO \,$, and we obtain a well-defined
map $\, \gr : \R \to \RO \,$.  There is also a map
$\, \gammaO : \RO \to \Pic X \,$, which can be defined in several
ways (see Proposition~\ref{4} below).
As the basic one, we choose the following.
Let $\, F \,$ be an ideal in $\, \DO \,$, and let $\, F^* \,$ be the
dual $\, \DO$-module.  Although $\, F \,$ is in general not locally free,
$\, F^* $ is; {\it a fortiori}, the bidual
$\, F^{**} \,$ is locally free, so defines an element of $\, \Pic(T^*X) \,$.
The projection $ \, p : T^*X \to X \,$ induces an isomorphism
$\, \Pic X \cong \Pic(T^*X) \,$: {\it we define $\, \gammaO(F) \,$
to be the class in $\, \Pic X \,$ corresponding to $\, (F^{**}) \,$ under this
isomorphism}.
\begin{theorem}
\la{comm}
The diagram
\begin{equation}
\label{D1}
\begin{diagram}[small]
 \R &                & \rTo^{\gr}  &                 & \RO  \\
    & \rdTo_{\gamma} &             & \ldTo_{\gammaO} &           \\
    &                &     \Pic X  &                 &
\end{diagram}
\end{equation}
commutes.
\end{theorem}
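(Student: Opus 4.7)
My strategy is to realize both $\gamma(M)$ and $\gammaO(\gr M)$ as the determinantal piece of a single $K$-theoretic invariant of $M$, and to use the Rees algebra to connect them.

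\textit{Step 1 (Reinterpret $\gammaO$ $K$-theoretically).} First I would show that $\gammaO$ coincides with the composition
\[
\RO \to K_0(\DO) \xrightarrow{(p^*)^{-1}} K_0(X) \xrightarrow{\det} \Pic X.
\]
For any rank-one torsion-free $F \subseteq \DO$, the cokernel of the canonical inclusion $F \hookrightarrow F^{**}$ is supported in codimension $\ge 2$ on the smooth surface $T^*X$, so it has trivial determinant in $\Pic(T^*X)$. Thus $\det[F] = [F^{**}]$ in $\Pic(T^*X)$; combining with the homotopy-invariance isomorphism $p^* : K_0(X) \xrightarrow{\sim} K_0(T^*X)$ recovers the definition of $\gammaO$ given in the excerpt. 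The definition of $\gamma$ in the paper is already of the parallel form $\R \to K_0(\D) \xrightarrow{q^{-1}} K_0(X) \xrightarrow{\det} \Pic X$, where $q$ is Quillen's isomorphism.

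\textit{Step 2 (Rees deformation).} Introduce the Rees algebra $\tilde\D := \bigoplus_{n \ge 0} \D_{\le n}\, t^n \subseteq \D[t]$ and, equipping $M$ with the induced filtration $M_{\le n} := M \cap \D_{\le n}$, the Rees module $\tilde M := \bigoplus_n M_{\le n}\, t^n$. Both are flat over $\K[t]$, with $\tilde\D/(t-1) = \D$, $\tilde\D/(t) = \DO$, $\tilde M/(t-1) = M$, and $\tilde M/(t) = \gr M$. The inclusion $\O(X)[t] \hookrightarrow \tilde\D$ (the degree-zero part of $\tilde\D$ with $t$ adjoined) induces a map $K_0(X[t]) \to K_0(\tilde\D)$. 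I would check that specializing at $t = 1$ recovers Quillen's map $q$ (after the homotopy isomorphism $K_0(X) \cong K_0(X[t])$), while specializing at $t = 0$ recovers $p^*$, since the induced ring homomorphism $\O(X) \to \DO$ is exactly the pullback inclusion $\O(X) \hookrightarrow \O(T^*X) = p_*\O_{T^*X}$.

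\textit{Step 3 (Conclusion).} Once $[\tilde M] \in K_0(\tilde\D)$ is shown to come from some class $c \in K_0(X[t]) \cong K_0(X)$, Step 2 gives $q(c) = [M]$ in $K_0(\D)$ and $p^*(c) = [\gr M]$ in $K_0(\DO)$; applying $\det$ to both and invoking Step 1 yields $\gamma(M) = \det(c) = \gammaO(\gr M)$, which is the claim.

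\textit{Main obstacle.} The substantive technical step is showing that $[\tilde M]$ actually admits such a lift, i.e.\ that $K_0(X[t]) \to K_0(\tilde\D)$ is an isomorphism. This is a filtered analogue of Quillen's theorem, which I would approach by d\'evissage along the natural filtration $\tilde\D^{(N)} := \bigoplus_{n \le N} \D_{\le n}\, t^n$, whose successive quotients are line bundles $\Theta^N = T_X^{\otimes N}$ over $\O(X)[t]$ placed in $t$-degree $N$, closely paralleling Quillen's original argument for $\D$ itself. An alternative would be to bypass the Rees algebra and argue directly that, writing $\gamma(M) = (I)$ with $M$ stably isomorphic to $I\D$, the filtration on $M$ admits (after possibly stabilizing) a strict shift making $\gr M$ stably isomorphic to $\gr(I\D) = I \cdot \DO$, from which $\gammaO(\gr M) = (I)$ follows from the worked case $F = I\DO$ where $(I\DO)^{**} = p^*I$.
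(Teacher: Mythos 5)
Your Rees--algebra strategy is the right circle of ideas, and in spirit it is a continuous version of what the paper actually does; but there are real gaps in Step~3, and the offered shortcut at the end is false.  The main technical problem is more serious than you acknowledge: the Rees module $\tilde M$ is \emph{not} projective over $\tilde\D$ in general (indeed its specialization $\gr M$ at $t=0$ is typically not projective over $\DO$, as the paper itself notes), so $[\tilde M]$ naturally lives in $G_0(\tilde\D)$ rather than $K_0(\tilde\D)$, and your Step~3 should be about the $G$-theory of $\tilde\D$.  Establishing $G_0(\O(X)[t])\cong G_0(\tilde\D)$, and that the two specializations at $t=1$ and $t=0$ behave as claimed, is a genuine filtered analogue of Quillen's theorem that you only sketch; the d\'evissage as written is not obviously valid (the successive quotients of your $\tilde\D^{(N)}$ are not $\O(X)[t]$-modules in the way needed, and d\'evissage is a $G$-theoretic, not $K$-theoretic, tool).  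The proposed ``direct'' alternative is simply wrong: $\gr M$ can never be stably isomorphic to $I\DO$ unless it is already projective, since a direct summand of a projective module is projective, and $\gr M$ is generically not projective.

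What is true---and is exactly the paper's proof---is the weaker statement that $\gr M$ and $I\DO$ define the \emph{same class in} $G_0(T^*X)$.  The key fact is Ginzburg's lemma (Lemma~\ref{L1} in Appendix~B): for a finitely generated module over a filtered ring, the class $[\gr M]\in G_0(\gr A)$ does not depend on the choice of good filtration.  Applying this to the single module $N:=M\oplus\D^k\cong I\D\oplus\D^k$, equipped with the two good filtrations induced from the two presentations, gives $[\gr M]+k[\DO]=[I\DO]+k[\DO]$ in $G_0(T^*X)$, hence $[\gr M]=[I\DO]$; Proposition~\ref{4}~((i)~$\Rightarrow$~(iii)) then converts this $G_0$-equality into $\gammaO(\gr M)=(I)=\gamma(M)$.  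Ginzburg's lemma is proved by an elementary chain of ``adjacent filtrations'' and is really a discretized form of your Rees deformation, so your intuition is on target; but you need it precisely because stable isomorphism fails, and invoking it directly avoids having to develop the $G$-theory of the Rees ring from scratch.
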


In contrast to our noncommutative case, the
fibration $\, \gammaO \,$ has a natural trivialization: in fact each fibre of
$\, \gammaO \,$ is canonically identified with
the disjoint union of the point
Hilbert schemes $\, \Hilb_n(T^*X) \,$ for $\, n \geq0 \,$:
see Section~\ref{commsec} for more details.

It is a general property of surfaces (we are concerned with
the case $\, T^*X \,$) that the canonical inclusion
$ \, F \hookrightarrow F^{**} \,$ of an ideal of $\, \DO \,$ in its bidual has
finite codimension.  This fact gives us an important invariant
$\, n = n(M) \,$ of an ideal (class) $\, M \,$ in $\, \D \,$: namely,
$\, n(M) \,$ {\it is the codimension of $\, \MO \,$
in its bidual}.  As we shall
see in \cite{BC}, this invariant $\,n \,$ indexes the
Calogero-Moser stratum $\, \C_n \,$ to
which $\, M \,$ belongs in the decomposition of the fibres of $\, \gamma \,$.
In the present paper we content ourselves with proving the following.
\begin{theorem}
\la{picth}
The invariant $\, n \,$ is constant on the orbits of
$\, \Pic \D \,$ in $\, \R \,$.
\end{theorem}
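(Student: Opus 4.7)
The plan is to transport the question to the classical surface $T^*X$, where biduals and their defects behave transparently under twists by line bundles. The invariant $n(M)=\dim_\c(\MO^{**}/\MO)$ depends only on the coherent sheaf $\MO=\gr M$ on $T^*X$, so it is natural to reinterpret the action of $\Pic \D$ at the level of $\gr$ and exploit the local freeness of the resulting twist.

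Concretely, I would proceed as follows. For each $P\in\Pic\D$, equip $P$ with a good bimodule filtration; since $P$ is invertible, the multiplicativity of $\gr$ under tensor products of bimodules forces $\gr P$ to be an invertible $\gr\D$-bimodule, and hence (because $\DO=\O(T^*X)$ is commutative) an honest line bundle $\L$ on $T^*X$. For $M\in\R$ with its induced order filtration, the tensor product $P\otimes_\D M$ carries the natural tensor filtration and one has a canonical isomorphism $\gr(P\otimes_\D M)\cong \L\otimes_\DO\MO$. Since $\L$ is locally free of rank one, tensoring by $\L$ commutes with the formation of biduals, so
\[
(\L\otimes\MO)^{**}/(\L\otimes\MO)\;\cong\;\L\otimes_\DO(\MO^{**}/\MO).
\]
The right-hand side is a coherent torsion sheaf of finite length on $T^*X$, and tensoring by a line bundle preserves length. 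Hence
\[
n(P\otimes_\D M)=\dim\bigl(\L\otimes(\MO^{**}/\MO)\bigr)=\dim(\MO^{**}/\MO)=n(M).
\]

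The crux, and the only real obstacle, is the first step: producing a bimodule filtration on $P$ whose associated graded is an invertible $\DO$-bimodule. There is no canonical filtration on an arbitrary invertible bimodule, so some input about the structure of $\Pic\D$ is needed. The most hands-on way to supply this is to show that every class in $\Pic\D$ is represented by a bimodule of the form $I\D$ (possibly twisted by a filtration-preserving automorphism of $\D$) for some $I\in\Pic X$; such a bimodule inherits the order filtration from $\D$, and a local computation gives $\gr(I\D)\cong p^*I$, which is a line bundle on $T^*X$. The transitivity in Theorem~\ref{T1} and the compatibility of $\gamma$ with $\gammaO$ in Theorem~\ref{comm} make this reduction natural; once it is in place, the length-preservation argument of the previous paragraph delivers the theorem.
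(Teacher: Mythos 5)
Your strategy --- compute $\gr$ of the $\Pic\D$-twist, recognize it as a line-bundle twist on $T^*X$, and observe that such twists preserve the colength of $\MO$ in $\MO^{**}$ --- is exactly the paper's. You are also right to distrust your own first paragraph: good filtrations are \emph{not} automatically multiplicative under $\otimes_\D$, so the abstract argument that $\gr P$ is invertible does not go through, and your fallback to the concrete model via Cannings--Holland is the right move.

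There is, however, one genuine gap in that concrete model, and it is the real content of the proof. After writing $P\cong (\D I)_\varphi$ with $\varphi:\D\to\End_\D(\D I)=I^*\D I$, you assert that the twisting automorphism can be taken filtration-preserving. That is \emph{not} supplied by the Cannings--Holland classification: the isomorphism $\varphi$ could a priori scramble the order filtration, in which case $I\D$ carries no bimodule filtration compatible with the one on $\D$ and you get no control over $\gr(P\otimes_\D M)$. What rescues this is Makar-Limanov's theorem that, for $X\not\cong\A^1$, the subalgebra $\O(X)\subset\D$ is the \emph{unique} maximal abelian ad-nilpotent subalgebra; this forces $\varphi(\O(X))=\O(X)$ and hence $\varphi$ to preserve the order filtration, and it is precisely why the paper excludes $X\cong\A^1$ from this section (that case being already known). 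Once this is in place the rest of your sketch matches the paper: one computes $\gr(M\otimes_\D P)\cong\widehat\sigma^{-1}(\MO I)$ (Proposition~\ref{greq}, via Lemma~\ref{phi}), and the operation $F\mapsto\widehat\sigma^{-1}(FI)$ on ideals of $\DO$ visibly preserves $\dim(F^{**}/F)$.
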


The last theorem we want to formulate in this Introduction makes contact
with the description of $\, \R \,$ given in \cite{CH1}, which (in contrast
to the above) seems to have no classical analogue.  To each ideal class
$\, (M) \in \R \,$ Cannings and Holland assign a {\it primary decomposable}
subspace $\, V \,$ of $\, \O(X) \,$, well defined up to multiplication
by a rational function.  We review the precise definition in
Section~\ref{CHsec};
here we just recall that $\, V \,$ is determined by a finite set of points
$\, x_1 , \ldots, x_k \,$ of $\, X \,$, together with a subspace
$\, V_i \subset \O(X) \,$ associated to each point $\, x_i \,$. Each
$\, V_i \,$ has finite codimension, say $\, n_i \,$, in $\ \O(X) \,$.
Thus we may assign to $\, V \,$ the divisor
\begin{equation}
\la{divdef}
\Div(V) \,:=\,  - \sum_{i=1}^k n_i x_i \
\end{equation}
(the bizarre minus sign is explained by Theorem~\ref{T3} below).
If we multiply $\, V \,$ by a rational function, the class
of $\, \Div(V) \,$ in $\, \Pic X \,$ stays the same: assigning to
$\, (M) \,$ this divisor class, we get another map
\begin{equation}
\label{CH}
\Div : \R \to \Pic X \ .
\end{equation}
\begin{theorem}
\label{T3}
The map \eqref{CH} coincides with $\, \gamma \,$.
\end{theorem}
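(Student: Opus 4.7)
The plan is to compute $\gamma(M)$ and $\Div(V)$ as elements of $\Pic X$ in a common K-theoretic language. By the definition of $\gamma$, it suffices to produce a class $[F] \in K_0(X)$ of rank one whose image in $K_0(\D)$ under Quillen's isomorphism agrees with $[M]$, and then to verify that $\det[F] = \O(-\sum n_i x_i)$. The second task lives entirely in $K_0(X)$, while the first requires unpacking the Cannings--Holland correspondence reviewed in Section~\ref{CHsec}.

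For the first step, I would lift the primary decomposable subspace $V = \bigcap_i V_i \subset \O(X)$ to a coherent fractional ideal $\mathcal V$ on $X$ and show that $M$ is stably isomorphic, as a right $\D$-module, to $\mathcal V \otimes_{\O(X)} \D$. The ambiguity of $V$ under multiplication by a rational function $f \in \c(X)^\times$ corresponds to scaling $\mathcal V$ by $f$, which changes $\det[\mathcal V]$ only by the principal divisor $\Div f$ and so does not affect the resulting class in $\Pic X$. Once $[M] = [\mathcal V \otimes_{\O(X)} \D]$ modulo stable equivalence is known, functoriality of the Quillen isomorphism---which is induced by $-\otimes_{\O(X)} \D$---reduces $\gamma(M)$ to $\det[\mathcal V]$ in $K_0(X)$.

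The second step is a routine K-theoretic computation. Choosing a representative with $\mathcal V \subset \O(X)$, the quotient $\O(X)/\mathcal V$ is a torsion sheaf of length $n_i$ at $x_i$, and the sequence
\begin{equation*}
0 \to \mathcal V \to \O(X) \to \O(X)/\mathcal V \to 0
\end{equation*}
yields $[\mathcal V] = [\O(X)] - \sum_i n_i\,[\O_{x_i}]$ in $K_0(X)$. Since $\det[\O_{x_i}] = \O(x_i)$ on a smooth curve, additivity gives $\det[\mathcal V] = \O(-\sum_i n_i x_i) = \O(\Div V)$, which is the required class $\gamma(M)$.

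The main obstacle is the first step: reconstructing $M$ (up to stable isomorphism) from $\mathcal V$. The Cannings--Holland correspondence is not literally $M = V\D$; rather, it intertwines the local data $V_i$ at $x_i$ with the local structure of $M$ at those points in a subtler manner. The cleanest route is probably to exhibit, for each pair $(M,V)$, an explicit chain of $\D$-module morphisms linking $M$ and $\mathcal V \otimes_{\O(X)} \D$ whose successive kernels and cokernels have trivial class in $K_0(\D)$, which is attainable because $\D$ is hereditary and such contributions can be absorbed by stable equivalence. An alternative is to factor through the classical limit: by Theorem~\ref{comm}, $\gamma(M) = \gammaO(\gr M)$, so one may instead compare the local structure of $\gr M$ along the fibres of $p : T^*X \to X$ over the $x_i$ with the codimensions $n_i$ of the $V_i$; this bypasses the Quillen detour but demands a clean dictionary between $V$ and $\gr M$.
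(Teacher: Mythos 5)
Your setup correctly reduces Theorem~\ref{T3} to a single claim: that $M_V$ is stably isomorphic, as a right $\D$-module, to $\mathcal V \D$ for a fractional ideal $\mathcal V$ with colength $n_i$ at each $x_i$ --- i.e.\ $\mathcal V = \prod_i \m_{x_i}^{n_i}$. Once this is granted, the divisor-class computation is immediate (indeed, it is essentially definitional rather than a ``routine K-theoretic computation'': on a curve an ideal with colength $n_i$ at $x_i$ is forced to be $\prod \m_{x_i}^{n_i}$, so its divisor class is $-\sum n_i x_i$ by inspection).

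The problem is that you then leave the claim itself open. You flag it yourself as ``the main obstacle'' and offer only two gestures at a proof: (i) exhibit an ``explicit chain of $\D$-module morphisms'' with trivial $K_0$-contributions, which you do not construct and which is not obviously available --- the Cannings--Holland map $V \mapsto M_V$ is defined by conditions on the values $D.\O(X)$, not by an ideal-theoretic operation, so there is no off-the-shelf chain; (ii) pass through the classical limit via Theorem~\ref{comm} and build a ``clean dictionary between $V$ and $\gr M$.'' Route (ii) is what the paper actually does, but the dictionary is where all the real work lies: one needs Theorem~\ref{fatM} to identify $\gamma(M_V)$ with the class of the ideal $I_V$ with $\MO_V^{**} = I_V\DO$, and then Theorem~\ref{hard} to compute $I_V = \prod \m_x^{n_x}$. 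The key technical input is Lemma~\ref{vhard}, which equates $\dim(\O/V)$ with $\dim(\O/I_V)$ by comparing the Smith--Stafford $1$-length of $\DO/\MO_V$ with the ordinary length of $\D/M_V$ (via \cite{SS}, Lemma~3.10) and then invoking the dimension formula of \cite{CH1}, Corollary~3.8. Unique factorization in the Dedekind domain $\O(X)$ is used to pin down each exponent. None of this is a formal consequence of K-theory or of the Quillen isomorphism; it depends specifically on the structure theory of modules over $\D$ and $\DO$. As written, the proposal identifies the right target and the right ambient framework, but omits the central argument.
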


The paper is arranged as follows.  In Section~\ref{commsec} we discuss
briefly the the commutative analogue of our problem and give the proof
of Theorem~\ref{comm}.  Section~\ref{picsec} is devoted to the action of
$\, \Pic \D $, and contains the proof of Theorems~\ref{T1} and \ref{picth}.
Section~\ref{fatsec} could be read in conjunction with Section~\ref{commsec}:
it contains a more refined version of $\, \gamma \,$ at the level of
ideals (rather than ideal classes).  That is needed in the last section,
which gives the proof of Theorem~\ref{T3}.  Finally, the two appendices
provide proofs for some of the less well known facts used in
Section~\ref{commsec}.

To end this introduction, we point out that our choice to work with
{\it right} ideals in this paper was quite arbitrary, and that there are
similar results for the space $\, \I \,$ of left ideals.
Indeed, the dualization map
$\, M \mapsto \Hom_{\D}(M,\D) \,$ induces a natural bijection
$\, \delta : \R \to \I \,$, and the inclusion of $\, \O(X) \,$
into $\, \D \,$ still
identifies the Grothendieck group of finitely generated {\it left} projective
$\, \D$-modules with $\, \Z \oplus \Pic X \,$.  It follows that we have
a commutative diagram
\begin{equation*}
\begin{diagram}[small]
\R            & \rTo^{\delta}  & \I \\
\dTo^{\gamma} &                & \dTo_{\gamma} \\
\Pic X        & \rTo^{\iota}       & \Pic X \\
\end{diagram}
\end{equation*}
where $\, \iota \,$ is the inverse operation $\, I \mapsto I^* \,$ in the
group $\, \Pic X \,$.  Using this diagram we can deduce results for left
ideals from those for right ideals without rewriting our whole paper.
\subsection*{Notation.}
As above,
if $\, V \,$ is a filtered vector space, the associated graded space
$\, \gr V \,$ will often be denoted by $\, \overline{V}\,$; and if
$\, M \,$ is a module over a ring, we shall write $\, [M] \,$ for the
class of  $\, M \,$ in the appropriate Grothendieck group, and
(if necessary) $\, (M) \,$ for the isomorphism class of $\, M \,$.  To
avoid fussy notation we will sometimes confuse $\, M \,$ and $\, (M) \,$.
We denote by $\, \K \,$ the quotient field of $\, \O(X) \,$ and by
$\, \D(\K) $ the ring of differential operators on $\, \K \,$.

\section{The commutative case}
\la{commsec}
\subsection*{General remarks}
Recall first that if
$\, \F \,$ is a torsion-free coherent sheaf over any smooth variety,
the singular set\footnote{That is, the set of points near
which $\, \F \,$ is not
locally free.} of $\, \F \,$ has codimension at least $2$, and if
$\, \F \,$ is {\it reflexive} its singular set has codimension at least $3$
(see, for example \cite{OSS}, Corollary, p.\ 148 and Theorem~1.1.6, p.\ 145).
In the case of a smooth {\it surface}
$\, S \,$, these simple facts already yield a lot of information about
the space $\, \R(S) \,$ of rank $1$ torsion-free sheaves over $\, S \,$.
Indeed, if $\, \F \,$ is such a sheaf, then its dual
$\, \F^* \,$ (being reflexive) must be locally free, and the canonical
inclusion $\, \F \hookrightarrow \F^{**} \,$ has quotient $\, \F^{**}/\F \,$
supported on a finite set of points, and hence has finite length, say $n$.
Tensoring the inclusion $\, \F \hookrightarrow \F^{**} \,$ with the line
bundle  $\, \F^*\, $, we obtain a sheaf of ideals
$\, \F \otimes \F^* \subseteq \O_S \,$
of colength $n \,$,
that is, a point of the Hilbert scheme $\, \Hilb_n(S) \,$.
In this way we obtain a natural identification
\begin{equation}
\la{RS}
\R(S) \,\cong\, \Pic S  \times  \bigsqcup_{n \geq 0} \Hilb_n(S) \ ;
\end{equation}
the projection
$\, \gamma_S : \R(S) \to \Pic S \,$ onto the first factor in \eqref{RS}
sends the class of $\, \F \,$ to the class of $\, \F^{**} \,$ in
$\, \Pic S \,$.  As we shall see more clearly below, this
is analogous to the map $\, \gamma \,$ that we are studying in the
noncommutative case.

In the case where $\, S \,$ is an {\it affine} surface,
we may replace $\, \F \,$ by the corresponding module $\, F \,$
of global sections, and embed it
as an ideal in $\, \O(S) \,$.  The statements above then translate
into statements about ideals in $\, \O(S) \,$: in particular, an ideal
$\, F \,$ has {\it finite codimension} in its bidual $\, F^{**} \,$.
\subsection*{The map $\gammaO$}
Let us specialize to
the case\footnote{More generally, in the parts of what follows that
are not concerned with $\, \D \,$, $\, S \,$ could be the total
space of any line bundle over $\, X \,$.}
where $\, S = T^*X \,$ for some smooth
affine curve $\, X \,$.
In this case the projection $\, p : T^*X \to X \,$
induces an {\it isomorphism}
$\, \Pic X \cong \Pic(T^*X) \,$ (see Remark~\ref{picis}): we define the map
$\, \gammaO : \RO \to \Pic X \,$ to be the composition of the projection
$\, \gamma_S \,$ above with this isomorphism.
If $\, I \,$
is an ideal in $\, \O(X) \,$, then $\, p^*(I) \in \Pic(T^*X) \,$ is
represented by the extended ideal $\, I\DO \subseteq \DO \equiv \O(T^*X) \,$.
Thus each invertible ideal $\, L \subseteq \DO \,$ is isomorphic to
some $\, I\DO \,$, and $\, I \,$ is determined
up to isomorphism by $\, L \,$.
So the definition of $\, \gammaO \,$ amounts to the following:
{\it if $\, F \,$ is an ideal in $\, \DO \,$, then $\, \gammaO(F) \,$
is the unique ideal class $\, (I) \,$ such that $\, F^{**} \,$ is
isomorphic to $\, I\DO \,$}.

The next Proposition gives several other ways to define $\, \gammaO \,$.
\begin{proposition}
\la{4}
Let $\, F \,$ be an ideal in $\, \DO \,$, and let $\, I \,$ be an ideal
in $\, \O(X) \,$.  Then the following are equivalent.\\
{\rm (i)} $\, F \,$ and $\, I\DO \,$ determine the same class in the
Grothendieck group $\, G_0(T^*X) \,$.\\
{\rm (ii)} $\, F^{**} \,$ and $\, I\DO \,$ determine the same class in the
Grothendieck group $\, K_0(T^*X) \,$.\\
{\rm (iii)} $\, F^{**} \,$ and $\, I\DO \,$ determine the same class in
$\, \Pic(T^*X) \,$.\\
{\rm (iv)} There is an injective homomorphism $\, F \hookrightarrow I\DO \,$
with finite dimensional cokernel.
\end{proposition}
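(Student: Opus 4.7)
The plan is to take (iii) as the hub and verify each of the other three conditions against it.

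The equivalence $(iii) \Leftrightarrow (iv)$ is the most elementary. For $(iii) \Rightarrow (iv)$, any isomorphism $F^{**} \cong I\DO$ composed with the canonical inclusion $F \hookrightarrow F^{**}$ gives an embedding with cokernel $F^{**}/F$, of finite length by the general surface fact recalled above. For the converse, given $\varphi : F \hookrightarrow I\DO$ with finite-dimensional cokernel $C$, I would regard $F$ and $I\DO$ as fractional ideals in $\Frac(\DO)$, so that $\varphi$ becomes multiplication by some $0 \neq r \in \Frac(\DO)$; this same multiplication furnishes an injection $rF^{**} \hookrightarrow (I\DO)^{**} = I\DO$ whose cokernel is a quotient of $C$, again of finite length. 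On the smooth surface $T^*X$, however, an inclusion of invertible sheaves with finite-length cokernel must be an isomorphism, since locally the cokernel has the form $\DO_q/(f)$, which is of infinite length unless $f$ is a unit. Hence $F^{**} \cong I\DO$.

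For $(ii) \Leftrightarrow (iii)$, both $F^{**}$ and $I\DO$ are invertible of rank one, so under the rank-and-determinant isomorphism $K_0(T^*X) \cong \Z \oplus \Pic(T^*X)$ their classes have the form $(1,[F^{**}])$ and $(1,[I\DO])$; these agree in $K_0$ precisely when they agree in $\Pic$.

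The equivalence $(i) \Leftrightarrow (ii)$ is the real content. Smoothness of $T^*X$ gives $G_0(T^*X) \cong K_0(T^*X)$, and the sequence $0 \to F \to F^{**} \to F^{**}/F \to 0$ yields $[F] = [F^{**}] - [F^{**}/F]$ in $G_0$, while $[I\DO] = [(I\DO)^{**}]$ trivially. So the equivalence reduces to showing that $[Q] = 0$ in $G_0(T^*X)$ for every finite-length sheaf $Q$, and this is the crux. The key input is that $p : T^*X \to X$ realizes $T^*X$ as a line bundle over the curve $X$, so by homotopy invariance $K_0(T^*X) \cong K_0(X) = \Z \oplus \Pic(X)$ and there is no codimension-two summand. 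Concretely, for each closed point $q \in T^*X$ with $p(q) = x$, pushforward along the closed immersion of the fiber $p^{-1}(x) \cong \A^1$ identifies $[\c_q]$ in $G_0(T^*X)$ with its (zero, by rank) class in $K_0(\A^1) = \Z$; a composition-series d\'evissage then extends the vanishing to arbitrary finite-length $Q$.
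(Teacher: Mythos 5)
Your proof is correct, but it is organized quite differently from the paper's, which runs the four conditions around a single cycle (i)$\Rightarrow$(ii)$\Rightarrow$(iii)$\Rightarrow$(iv)$\Rightarrow$(i). Two of your spokes use genuinely different arguments. For (iv)$\Rightarrow$(iii) you avoid $K$-theory entirely: you take biduals of the inclusion $rF \subseteq I\DO$ of fractional ideals and observe that an inclusion of invertible sheaves on a surface with finite-length quotient is locally $(f)\subseteq \O_q$ with $f$ a unit, hence an equality. The paper instead proves (iv)$\Rightarrow$(i) by noting that the cokernel class lies in $SK_0(T^*X)$, which vanishes. Your local argument is more elementary and self-contained, while the paper's reuses the $SK_0$ machinery from the appendix. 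For (i)$\Leftrightarrow$(ii) you replace the paper's ``dualize twice'' argument (which implicitly needs that $\Hom(-,\DO)$ descends well to $G_0$ for torsion-free modules) with the cleaner observation that $[F]=[F^{**}]$ in $G_0(T^*X)$ because every finite-length sheaf has trivial class; and you prove the latter by pushing a skyscraper forward along a fiber $\A^1\hookrightarrow T^*X$ and d\'evissage, rather than via the appendix's determinant-of-a-resolution argument. Both routes ultimately rest on the same input --- homotopy invariance of $K$-theory for $T^*X\to X$ --- but your version of the finite-length vanishing is more concrete. One minor presentational point: when you say the injection $rF^{**}\hookrightarrow I\DO$ exists, it is worth a sentence to note that $rF\subseteq I\DO$ implies $rF^{**}\subseteq (I\DO)^{**}=I\DO$ because biduality of fractional ideals is inclusion-preserving; you clearly know this, but a reader may pause there.
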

\begin{proof}
(i) $\Rightarrow$ (ii).  Suppose (i) holds.  Then the dual modules $\, F^* \,$
and $\, I^* \DO \,$ also determine the same class in $\, G_0(T^*X) \,$.
Applying this remark again, we find that $\, F^{**} \,$ and $\, I\DO \,$
determine the same class in $\, G_0(T^*X) \,$.  Because $\, T^*X \,$ is
smooth, the canonical map $\, K_0(T^*X) \to G_0(T^*X) \,$ is an
isomorphism, so (ii) follows.\\
(ii) $\Rightarrow$ (iii). This is trivial, since
$\, \det : K_0(T^*X) \to \Pic(T^*X) \,$ maps the class of a line bundle to
itself.\\
(iii) $\Rightarrow$ (iv). We have already
seen that the canonical inclusion $\, F \hookrightarrow F^{**} \,$ has
finite dimensional cokernel, so this follows at once.\\
(iv) $\Rightarrow$ (i). Let $\, T \,$ be the cokernel of the given
inclusion $\, F \hookrightarrow I\DO \,$, so that we have
$\, [F] + [T] = [I\DO] \,$ in $\, G_0(T^*X) \,$.  The support of $\, T \,$
has codimension $2$, so (see Appendix A) $\, [T] \,$ lies in the kernel
of the map
$$
\rk \oplus \det \,:\, G_0(T^*X) \cong K_0(T^*X)
\,\to\, \Z \oplus \Pic(T^*X) \;.
$$
This map is an isomorphism (see Appendix A), hence
$\, [T] = 0 \,$ in $\, G_0(T^*X) \,$ and (i) follows.
\end{proof}

\subsection*{Comparison with the noncommutative case}

Let $\, M \,$ be a rank $1$ torsion-free (right) $\, \D$-module.  We
may embed $\, M \,$ as an ideal in $\, \D \,$ and give it the induced
filtration; we write $\, \MO := \gr M \,$ for the associated graded
$\, \DO$-module.  If we change the embedding of $\, M \,$ in $\, \D \,$,
the induced filtration changes at most by an overall shift, so
the isomorphism class of $\, \MO \,$ (forgetting the filtration) depends
only on the isomorphism class of $\, M \,$.  Thus we get a well-defined
map $\, \gr : \R \to \RO \,$, fitting into the diagram \eqref{D1} in the
Introduction.  We can now give the
\begin{proof}[Proof of Theorem~\ref{comm}]
Let $\, M \,$ be an ideal in $\, \D \,$.  By definition, $\, \gamma(M) \,$
is the unique ideal class $\, (I) \in \Pic X \,$ such that $\, M \,$ is
stably isomorphic to $\, I\D \,$.  That means that for some $\, k \geq 0 \,$,
we have
\begin{equation}
\la{stably}
M \oplus \D^k \, \cong \, I\D \oplus \D^k \ .
\end{equation}
Using the induced filtrations, the graded modules associated to the two
sides of \eqref{stably} are $\, \MO \oplus \DO^k \,$ and
$\, I\DO \oplus \DO^k \,$.  Certainly, these modules are
in general not isomorphic
(because $\, \MO \,$ is not projective); but Ginzburg's lemma (see Appendix B)
implies that they determine the same class in $\, G_0(T^*X) \,$.  By
Proposition~\ref{4} ((i) $\Rightarrow$ (iii)),
this means that $\, \gammaO(\MO) = (I) = \gamma(M) \,$.
\end{proof}
\section{The action of $\Pic \D$}
\la{picsec}

In this section we prove (among other things) Theorems~\ref{T1}
and \ref{picth}.  We shall assume that
$\, X \,$ is not isomorphic to $\, \A^1 \,$: in that case Theorem~\ref{T1}
is vacuous and Theorem~\ref{picth} is already known.  The reason for excluding
$\, \A^1 \,$ is that we shall need to use a theorem of
Makar-Limanov (see \cite{ML}), which
states that except in the case $\, X \cong \A^1 \,$ the ring
$\, \O(X) \subset \D \,$ is characterized as the {\it unique} maximal
abelian ad-nilpotent subalgebra of $\, \D \,$.

We recall from \cite{CH2} some information about $\, \Pic \D \,$.
By definition, the elements of $\, \Pic \D \,$ are the
isomorphism classes of invertible $\, \D$-bimodules
(with scalars $ \c\subset \D $ acting the same way on both sides).
Let $\, P \,$ be an invertible $\, \D$-bimodule. Then Cannings and
Holland show that $\, P \,$ is isomorphic as {\it left} $\, \D$-module to
$\, \D I \,$ for some ideal $\, I \,$ in $\, \O(X) \,$. The (commuting)
structure of right $\, \D$-module is then determined by an
isomorphism $\, \varphi : \D \to \End_{\D}(\D I) \,$:
we denote by $\, (\D I)_{\varphi} \,$
the $\, \D$-bimodule determined in this
way by a pair $\, (I, \varphi) \,$.

Recall that $\, \K \,$ denotes the field of rational functions on $\, X \,$.
In what follows, we shall identify
$\, \End_{\D}(\D I) \,$ with the subring
$\, I^* \D I \,$ of $\, \D(\K) \,$ (where
$\, I^* \subset \K \,$ is the fractional ideal inverse to $\, I \,$):
indeed, we have
\begin{align*}
\End_{\D}(\D I) &\,=\, \{ \theta \in \D(\K) : \D I \theta \subseteq \D I\}\\
                &\,=\, \{ \theta \in \D(\K) : I \theta \subset \D I \}\\
                &\,=\, I^*\D I\ .
\end{align*}
The isomorphism $\, \varphi : \D \to I^* \D I \,$ extends uniquely
to an automorphism of $\, \D(\K) \,$ (which we denote by the same symbol).
By Makar-Limanov's theorem, $\, \varphi \,$
must map $\, \O(X) \,$ to itself, and
hence also must preserve the filtration (by order of operator). We denote by
$\, \sigma \in \Aut(\O(X)) \,$ the restriction\footnote{Generically
$\, \O(X) \,$ has no nontrivial automorphisms, so $\, \sigma \,$
is the identity.}
of $\, \varphi \,$ to $\, \O(X) \,$. Theorem~\ref{T1} follows trivially
from the following explicit formula describing the
(right) action of $\, \Pic \D \,$
on $\, \Pic X \,$.
\begin{proposition}
\la{expl}
Let $\, P := (\D I)_{\varphi} \,$, and let $\, (J) \in \Pic X \,$.
Then we have
$$
J.P \,=\, \sigma^{-1}(JI) \,.
$$
\end{proposition}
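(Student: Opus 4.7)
The plan is to unpack the definition of the action of $\, \Pic \D \,$ on $\, \Pic X \,$ in terms of the chosen representatives. Since $\, \gamma \,$ is equivariant for this action and $\, (J) \in \Pic X \,$ is represented by $\, J\D \in \R \,$, one has $\, (J).P = \gamma(J\D \otimes_{\D} P) \,$. So the task reduces to identifying the right $\, \D$-module $\, J\D \otimes_{\D} P \,$ up to stable isomorphism.

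The first main step is to compute this tensor product explicitly. By extension of scalars $\, J\D \cong J \otimes_{\O(X)} \D \,$ as right $\, \D$-modules (using that $\, \D \,$ is locally free as a left $\, \O(X)$-module, since its associated graded $\, \O(T^*X) \,$ is), and hence
\[
J\D \otimes_{\D} P \,\cong\, J \otimes_{\O(X)} P \,.
\]
Via the identification of $\, P \,$ with $\, \D I \subset \D \,$ as left $\, \D$-module, the multiplication map $\, J \otimes_{\O(X)} \D I \to J\D I \subset \D(\K) \,$ is an isomorphism. Thus $\, J\D \otimes_{\D} P \,$ is identified with the subspace $\, J\D I \subset \D(\K) \,$ equipped with the \emph{twisted} right $\, \D$-action $\, m \cdot d := m\varphi(d) \,$.

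The second main step is to apply the automorphism $\, \varphi^{-1} \,$ of $\, \D(\K) \,$ to untwist the action. The map $\, \varphi^{-1} : J\D I \to \varphi^{-1}(J\D I) \,$ is an isomorphism of right $\, \D$-modules from the twisted action to the ordinary right-multiplication action on the target, because $\, \varphi^{-1}(m\varphi(d)) = \varphi^{-1}(m)\,d \,$. It remains to compute $\, \varphi^{-1}(J\D I) \,$. By Makar-Limanov's theorem $\, \varphi^{-1} \,$ preserves $\, \O(X) \,$ and restricts there to $\, \sigma^{-1} \,$, so $\, \varphi^{-1}(J) = \sigma^{-1}(J) \,$ and $\, \varphi^{-1}(I) = \sigma^{-1}(I) \,$. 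Applying $\, \varphi^{-1} \,$ to the identity $\, \varphi(\D) = I^*\D I \,$ and solving for $\, \varphi^{-1}(\D) \,$ gives $\, \varphi^{-1}(\D) = \sigma^{-1}(I)\,\D\,\sigma^{-1}(I)^* \,$. Multiplying everything out inside $\, \D(\K) \,$ and collapsing $\, \sigma^{-1}(I)^*\sigma^{-1}(I) = \O(X) \,$, one obtains $\, \varphi^{-1}(J\D I) = \sigma^{-1}(JI)\,\D \,$.

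Therefore $\, J\D \otimes_{\D} P \cong \sigma^{-1}(JI)\,\D \,$ as right $\, \D$-modules, and $\, \gamma \,$ of this standard right ideal is by definition the class $\, (\sigma^{-1}(JI)) \in \Pic X \,$, which is the claimed formula. The real obstacle in the argument is getting the twisted module structure on $\, J\D \otimes_{\D} P \,$ right and then computing $\, \varphi^{-1}(\D) \,$ correctly as a sub-bimodule of $\, \D(\K) \,$; once those bookkeeping steps are done the formula drops out.
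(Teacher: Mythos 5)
Your proof is correct and follows essentially the same route as the paper's: both reduce to the equivariance of $\gamma$, identify $J\D \otimes_{\D} P$ with $(J\D I)_{\varphi}$, and then use the automorphism of $\D(\K)$ to exhibit an isomorphism of right $\D$-modules with $\sigma^{-1}(JI)\D$ (the paper applies $\varphi$ to $\sigma^{-1}(JI)\D$, you apply $\varphi^{-1}$ to $J\D I$; these are the same isomorphism read in opposite directions). The only difference is that you spell out the tensor-product identification and the ideal arithmetic in a bit more detail than the paper does.
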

\begin{proof}
Note first that
$\, J\D \otimes_{\D} (\D I)_{\varphi} \cong (J\D I)_{\varphi} \,$
(as right $\, \D$-modules); also, $\, \varphi \,$ defines (by restriction)
an isomorphism of right $\, \D$-modules
$$
\varphi \,:\, \sigma^{-1}(JI) \D \,\to\,
(JI)(I^*\D I)_{\varphi} \,=\, (J\D I)_{\varphi} \,.
$$
Since $\, \gamma : \R \to \Pic X \,$ is equivariant with respect to
$\, \Pic \D \,$, we have indeed
\begin{align*}
  J.P &\,=\, \gamma(J\D).P \\
      &\,=\, \gamma(J\D \otimes_{\D} P)\\
      &\,=\, \gamma(\sigma^{-1}(JI) \D)\\
      &\,=\,  \sigma^{-1}(JI) \;,
\end{align*}
as claimed.
\end{proof}
\begin{remark}
Let $\, \Aut \D \,$ be the group of $\, \c$-automorphisms of $\, \D \,$;
then we have the homomorphism
$\, \Aut \D \to \Pic \D \,$ sending $\, \varphi \in \Aut \D \,$ to the
bimodule $\, (\D)_{\varphi} \,$.  The kernel of this map is the group of
{\it inner} automorphisms of $\, \D \,$, so the group $\, \Out \D \,$ of outer
automorphism classes can be regarded as a subgroup of $\, \Pic \D \,$.
These remarks hold for any $\, \c$-algebra $\, \D \,$; in our case
it follows from Proposition~\ref{expl} that the
stabilizer of the neutral element of $\, \Pic X \,$ is exactly
$\, \Out \D \,$,
so we may identify the homogeneous space
$\, \Out \D \,\backslash \Pic \D \,$  with $\, \Pic X \,$.
\end{remark}

Now, the group $\, \Aut \O(X) \,$ acts (on the left) on $\, \O(X) \,$ and
thence on $\, \Pic X \,$, so we can form the semi-direct product
$\, S := \Pic X \rtimes \Aut(\O(X)) \,$.  The multiplication in
$\, S \,$ is given by
$$
(I, \sigma)(J, \tau) \,=\, (I \sigma(J), \sigma \tau) \ .
$$
\begin{lemma}[\cite{CH2}]
\la{hom}
The map\footnote{Cannings and Holland use the map
$\, (\D I)_{\varphi} \mapsto (I^* \!, \sigma) \,$, but this differs from ours
only by an automorphism of $\, S \,$.}
$\, (\D I)_{\varphi} \mapsto (I, \sigma) \,$ induces a well-defined
homomorphism of groups from $\, \Pic \D \,$ onto $\, S \,$.
\end{lemma}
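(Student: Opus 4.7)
The plan is to verify three things: (i) the assignment $(\D I)_\varphi \mapsto (I,\sigma)$ depends only on the bimodule, so descends to a well-defined map $\Pic\D \to S$; (ii) this map is a group homomorphism; (iii) it is surjective.

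For (i), suppose $(\D I)_\varphi \cong (\D I')_{\varphi'}$ as bimodules. Restricting scalars gives an isomorphism of left $\D$-modules, and Quillen's theorem (for left modules) combined with $K_0(\O(X)) \cong \Z \oplus \Pic X$ via $\rk \oplus \det$, applied to the rank-one module $\D I$, forces $(I) = (I') \in \Pic X$. Assuming then $I = I'$, any bimodule isomorphism is right multiplication by some $\theta \in \End_{\D}(\D I) = I^*\D I$, invertible therein, and intertwining the right actions yields $\varphi(d) = \theta\,\varphi'(d)\,\theta^{-1}$ in $\D(\K)$. Since $\theta$ is a unit of $\D(\K)$, whose order filtration has associated graded the polynomial ring $\K[\xi]$ with units $\K^*$, we deduce $\theta \in \K^*$. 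But $\K$ centralizes $\O(X)$ inside $\D(\K)$, so conjugation by $\theta$ acts trivially on $\O(X)$ and $\sigma = \sigma'$.

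For (ii), multiplication inside $\D(\K)$ defines a map $\alpha \otimes \beta \mapsto \alpha\varphi(\beta)$ from $(\D I)_\varphi \otimes_{\D} (\D J)_\psi$ into $\D(\K)$; it is $\D$-balanced by the very definition of the right action on the first factor, and a direct check shows it is a bimodule isomorphism onto $(\D I\varphi(J))_{\varphi \circ \psi}$. Since $\varphi|_{\O(X)} = \sigma$ and $J \subset \O(X)$, the image is $(\D I\sigma(J))_{\varphi \circ \psi}$, whose image in $S$ is $(I\sigma(J),\sigma\tau)$ -- exactly the semi-direct product $(I,\sigma)(J,\tau)$.

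For (iii), given $(I,\sigma) \in S$, extend $\sigma$ uniquely to an automorphism $\tilde\sigma \in \Aut\D$ (using that $\sigma$ determines the conjugation action on $\Der\O(X)$), so that $(\D)_{\tilde\sigma} \mapsto (\O(X),\sigma)$. Thanks to (ii) this reduces surjectivity to realizing $(I,\id)$ for every $I \in \Pic X$. When $I = (f)$ is principal, the inner automorphism $\varphi(d) := f^{-1}df$ of $\D(\K)$ lands in $I^*\D I$ and restricts trivially to $\O(X)$ because $f \in \K^*$ commutes with $\O(X)$. For general, non-principal $I$, one exploits that $\O(X)$ is Dedekind so $I$ is locally principal, and assembles $\varphi$ from local conjugations -- essentially the content of \cite{CH2}. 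This non-principal case is the main obstacle; (i), (ii), and the principal case of (iii) are routine bimodule manipulations.
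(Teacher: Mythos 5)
Your core argument for the homomorphism property (ii) is exactly the paper's sketch: the multiplication rule
$(\D I)_\varphi \otimes_\D (\D J)_\psi \cong (\D I\sigma(J))_{\varphi\psi}$
via $D_1\otimes D_2 \mapsto D_1\varphi(D_2)$, matching the semidirect product law in $S$. Where you go beyond the paper's (very brief) sketch, you largely succeed: your well-definedness argument in (i) is sound. The restriction to left modules together with Quillen's theorem and $K_0(\O(X))\cong\Z\oplus\Pic X$ does pin down $(I)\in\Pic X$; and the observation that any unit of $\D(\K)$ lies in $\K^\times$ (by comparing orders, since $\gr\D(\K)\cong\K[\xi]$ has units $\K^\times$), hence centralizes $\O(X)$, correctly forces $\sigma=\sigma'$. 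These are details the paper omits by citing \cite{CH2}.

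The genuine incompleteness is in (iii), and you flag it yourself. Reducing to realizing $(I,\id)$ is fine, and the principal case via inner conjugation by $f$ is correct; but the step ``assemble $\varphi$ from local conjugations'' for non-principal $I$ is not an argument. What is actually needed is an isomorphism of filtered $\c$-algebras $\D \to I^*\D I$ restricting to the identity on $\O(X)$, which amounts to the statement that the twisted differential operator ring $\D(I)$ of any line bundle $I$ on an affine curve $X$ is (noncanonically) isomorphic to $\D$ over $\O(X)$. The underlying reason is the vanishing $H^1(X,\Omega^1_X)=0$ for affine $X$, which makes every TDO on $X$ trivial; gluing local inner automorphisms is exactly the Čech picture of that vanishing, but it needs to be said. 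To be fair, the paper's own proof is labelled a sketch, credits \cite{CH2} in the lemma heading, and does not address surjectivity at all; so you have matched the paper where it proves something and identified, honestly, the one place where the real work lives.
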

\begin{proof}[Sketch of proof.]
Recall that $\, \sigma := \varphi \,\vert\, \O(X) \,$.  The lemma
follows from the fact that the multiplication of the bimodules
$\, (\D I)_{\varphi} \,$ is given by
$$
(\D I)_{\varphi} \otimes_{\D} (\D J)_{\psi} \,\cong\,
(\D I \sigma(J))_{\varphi \psi} \ .
$$
The isomorphism is defined by
$\, D_1 \otimes_{\D} D_2 \mapsto D_1 \varphi(D_2) \,$.
See \cite{CH2} for more details.
\end{proof}

The semi-direct product $\, S \,$ acts naturally (on the right) on
$\, \DO$-modules $\, F \,$ by
\begin{equation}
\la{S}
F.(I, \sigma) \,= \, \widehat{\sigma}_* (F \otimes_{\DO} I\DO) \ ,
\end{equation}
where $\, \widehat{\sigma} \,$ denotes the natural lifting of $\, \sigma \,$
to an automorphism of $\, \DO \equiv \O(T^*X) \,$.
In the case where $\, F \,$ is an ideal in $\, \DO \,$ that is equivalent to
\begin{equation}
\la{S1}
F.(I, \sigma) \,= \, \widehat{\sigma}^{-1} (FI) \ .
\end{equation}
Thus $\, S \,$ acts naturally on the space of ideals
$\, \RO \,$: using Lemma~\ref{hom}, we
may regard this as an action of $\, \Pic \D \,$.
Theorem~\ref{picth} will follow easily from the next
\begin{proposition}
\la{greq}
The map $\, \gr : \R \to \RO \, $ is equivariant with respect to
the actions of $\, \Pic \D \,$.
\end{proposition}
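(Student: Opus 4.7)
The plan is to reduce, via Lemma~\ref{hom}, to invertible bimodules of the standard form $P = (\D I)_{\varphi}$, whose image in $S$ is $(I,\sigma)$. By formula~\eqref{S1}, the equivariance of $\gr$ then amounts to the identity
$$
\gr(M \otimes_{\D} P) \,=\, \widehat{\sigma}^{-1}(\MO \cdot I) \quad \text{in } \RO.
$$
My strategy is to realize the left-hand side as $\gr$ of a concretely chosen right ideal in $\D$, and then to disentangle the contributions of $\varphi$ and of $I$.

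For the concrete model, I would verify that $m \otimes y \mapsto \varphi^{-1}(my)$ defines a right-$\D$-linear isomorphism from $M \otimes_{\D}(\D I)_{\varphi}$ onto the right ideal $\varphi^{-1}(MI) \subseteq \D$: the $\varphi$-twist built into the right action on $(\D I)_{\varphi}$ is exactly what makes this map right-linear, and the image lies in $\D$ because $MI \subseteq \D I \subseteq I^{*}\D I$ while $\varphi^{-1}(I^{*}\D I) = \D$. Since $\gr$ at the level of ideal classes is independent of the chosen embedding, it suffices to compute the gr of this particular ideal. Now Makar-Limanov's theorem, together with the assumption $X \not\cong \A^{1}$, forces $\varphi$ to preserve $\O(X)$ and hence the whole order filtration on $\D(\K)$, with $\gr\varphi = \widehat{\sigma}$. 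Consequently $\gr(\varphi^{-1}(MI)) = \widehat{\sigma}^{-1}(\gr(MI))$, and the proposition is reduced to the ``undressed'' identity $\gr(MI) = \MO \cdot I$ in $\DO$.

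I expect this last identity to be the principal technical obstacle, since its naive proof breaks down when linear combinations $\sum m_{j} a_{j} \in MI$ admit cancellation of top-order symbols. My plan is to establish it by localizing at each maximal ideal $\m \subset \O(X)$: there $I_{\m} = t\,\O(X)_{\m}$ is principal, and multiplication by the nonzero element $t \in \O(X) = \D_{0}$ is an order-preserving bijection of $\D(\K)_{\m}$, so $(MI \cap \D_{n})_{\m} = (M_{n}I)_{\m}$. These local identifications patch to $(MI)_{n} = M_{n} I$ globally; tensoring the exact sequences $0 \to M_{n-1} \to M_{n} \to \MO_{n} \to 0$ with the flat invertible $\O(X)$-module $I$ then identifies the graded piece $M_{n}I / M_{n-1}I$ with $\MO_{n}\cdot I \subseteq \DO$, yielding $\gr(MI) = \MO \cdot I$ and completing the argument.
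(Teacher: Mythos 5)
Your proof is correct and follows essentially the same route as the paper's: reduce via Lemma~\ref{hom} to bimodules $P = (\D I)_{\varphi}$, realize $M \otimes_{\D} P$ concretely as an ideal in $\D$, and invoke Makar-Limanov's theorem together with the identification $\overline{\varphi} = \widehat{\sigma}$ (Lemma~\ref{phi}) to pass to associated graded modules, matching formula~\eqref{S1}. The localization argument you supply for the identity $\gr(MI) = \MO\, I$ fills in a point that the paper treats as implicit; it is exactly the right detail to check, and your treatment of it is sound.
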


To see this, we need one more lemma. The graded algebra associated to
$\, I^* \D I \,$ is canonically identified with $\, \DO \,$, so that
$\, \varphi \,$ induces a graded automorphism $\, \overline{\varphi} \,$
of $\, \DO \,$.
\begin{lemma}
\la{phi}
We have $\, \overline{\varphi} = \widehat{\sigma} \,$.
\end{lemma}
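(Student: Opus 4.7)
The plan is to show that $\overline{\varphi}$ and $\widehat{\sigma}$ are graded $\c$-algebra automorphisms of $\DO$ that agree on a set of generators. Since $X$ is a smooth affine curve and $T^*X$ is the total space of a line bundle over $X$, the algebra $\DO = \O(T^*X)$ is generated as a $\c$-algebra by its degree-zero piece $\O(X)$ together with its degree-one piece, namely the tangent sheaf $T_X$, realized as $\DO_1 = \D_{\leq 1}/\D_{\leq 0}$ via the principal symbol map. So it suffices to check the equality on these two pieces.

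On the degree-zero piece, both maps obviously restrict to $\sigma$: by construction $\overline{\varphi}|_{\O(X)} = \varphi|_{\O(X)} = \sigma$, while $\widehat{\sigma}|_{\O(X)} = \sigma$ by definition of the natural lift.

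For the degree-one piece, the plan is to exploit the bracket identification of symbols with derivations. For any $D \in \D_{\leq 1}$ and any $f \in \O(X)$, the commutator $[D,f]$ lies in $\O(X)$, and the resulting derivation $\bar{D} \colon f \mapsto [D,f]$ on $\O(X)$ is precisely the principal symbol of $D$ viewed in $T_X$. Since $\varphi$ is a filtered algebra automorphism restricting to $\sigma$ on $\O(X)$, applying it to this commutator yields
\begin{equation*}
\overline{\varphi}(\bar{D})(\sigma(f)) \,=\, [\varphi(D),\sigma(f)] \,=\, \varphi([D,f]) \,=\, \sigma(\bar{D}(f)) \ ,
\end{equation*}
so $\overline{\varphi}(\bar{D}) = \sigma \circ \bar{D} \circ \sigma^{-1}$ as a derivation on $\O(X)$. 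This coincides with the pushforward action of $\sigma$ on vector fields, which is precisely the definition of $\widehat{\sigma}$ on $T_X = \DO_1$.

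Having shown that the two graded $\c$-algebra homomorphisms $\overline{\varphi}$ and $\widehat{\sigma}$ agree on the generating set $\O(X) \cup T_X$, we conclude that they coincide on all of $\DO$. There is no serious obstacle here; the one point requiring care is purely conventional, namely to confirm that the natural lift $\widehat{\sigma}$ is normalized so that its action on derivations is $\partial \mapsto \sigma \partial \sigma^{-1}$. This is forced by the naturality of the cotangent bundle construction and is implicit in the identification $\DO \equiv \O(T^*X)$ already fixed in the paper.
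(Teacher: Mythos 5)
Your proof is correct and follows essentially the same route as the paper's: both reduce the claim to the generating pieces $\O(X)$ and $\Der\O(X)$ of the symmetric algebra $\DO$, and both verify agreement in degree one by the same commutator computation identifying the symbol of $\varphi(D)$ with $\sigma\circ\bar{D}\circ\sigma^{-1}$. The only cosmetic difference is that the paper substitutes $f=\varphi(\sigma^{-1}(f))$ directly while you keep $\sigma(f)$ on the right-hand side and rearrange at the end.
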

\begin{proof}
Recall that $\, \DO \,$ is just the symmetric algebra of the module
$\, \Der \O(X) \,$ of derivations of $\, \O(X) \,$. By construction,
$\, \overline{\varphi} \,$ and $\, \widehat{\sigma} \,$ coincide on
$\, \O(X) \,$, so we have only to see that they agree on
$\, \Der \O(X) \,$.  Now, the identification
$$
I^* \D_{\leq 1} I / \O(X) \,\to\, \DO_1 \equiv \Der \O(X)
$$
sends an operator $\, D \,$ to the derivation $\, f \mapsto [D,f] \,$.
Also, if $\, \partial \in \Der \O(X) \,$ \and
$\, f \in \O(X) \,$, then in $\, \D \,$
we have the relation $\, [\partial, f] = \partial(f) \,$, whence
\begin{align*}
[\varphi(\partial),\,f] &\,=\, [\varphi(\partial), \,\varphi(\sigma^{-1}(f))]\\
                        &\,=\, \varphi[\partial, \,\sigma^{-1}(f)]\\
                        &\,=\, \varphi(\partial(\sigma^{-1}(f)))\\
                        &\,=\, (\sigma \partial \sigma^{-1})(f) \;.
\end{align*}
Combining these two remarks, we find that
$\, \overline{\varphi}(\partial) = \sigma \partial \sigma^{-1} \,$, which
(by definition) is indeed $\, \widehat{\sigma}(\partial) \,$.
\end{proof}

Now we can give the
\begin{proof}[Proof of Proposition~\ref{greq}]
Let $\, M \,$
be an ideal in $\, \D \,$, and let $\, P = (\D I)_{\varphi} \,$ be
an invertible $\, \D$-bimodule, as above.
Then $\, M \otimes_{\D} P = MI  \,$
with the structure of (right) $\, \D$-module
determined via the isomorphism $\, \varphi \,$.
Since $\, \varphi \,$ is filtration-preserving, it follows that
$\, \overline{M \otimes_{\D} P} = \MO I \,$ with the structure of
$\, \DO$-module determined via $\, \overline{\varphi} \,$; that is,
using Lemma~\ref{phi},
$$
\gr(M \otimes_{\D} P) \,\equiv\,
\overline{M \otimes_{\D} P} \,\cong\, \widehat{\sigma}^{-1}(\MO I) \ .
$$
Comparing with \eqref{S1}, we see that the map
$\, \gr  \,$ is equivariant.
\end{proof}

\begin{proof}[Proof of Theorem~\ref{picth}]
It is clear that if $\, F \,$ is an ideal in $\, \DO \,$ then the action
\eqref{S1}
of $\, S \,$ preserves the codimension of $\, F \,$ in $\, F^{**} $, so
Theorem~\ref{picth} follows at once from Proposition~\ref{greq}.
\end{proof}

\begin{remark*}
Comparing Proposition~\ref{expl} with the formula \eqref{S1} shows that
our map $\, \gammaO : \RO \to \Pic X \,$ is equivariant with respect to
the actions of $\, S \,$ (which we may regard as actions of $\, \Pic \D \,$).
Thus {\it all} the maps in the commutative diagram \eqref{D1}
are equivariant with respect to the actions of $\, \Pic \D \,$.
\end{remark*}

\section{Fat ideals}
\la{fatsec}

Following [BGK], we call an ideal in $\, \D \,$ or $\, \DO \,$
{\it fat} if its intersection with $\, \O(X) \,$ is nonzero.  The usefulness
of this notion comes from the simple
\begin{lemma}
\la{fat}
Every ideal in $\, \D \,$ or $\, \DO \,$ is isomorphic to a fat one.
\end{lemma}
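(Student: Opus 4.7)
The plan is to reduce to a single ``denseness'' condition: if a right ideal $\, N \subseteq \D \,$ satisfies $\, N \D(\K) = \D(\K) \,$ then $\, N \,$ is already fat (and similarly for $\, \DO \,$, with $\, \DO(\K) := \DO \otimes_{\O(X)} \K \,$). Once this is in place, the lemma follows by replacing the given ideal $\, M \,$ by an isomorphic copy that extends to all of $\, \D(\K) \,$.

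For the replacement step I use that $\, \D(\K) \,$ is a noncommutative principal ideal domain: since $\, \K \,$ has transcendence degree one over $\, \c \,$, the ring $\, \D(\K) \,$ is an Ore extension of $\, \K \,$ by a single derivation, so right Euclidean division by order of operator is available. Thus $\, M \D(\K) = L \D(\K) \,$ for some $\, L \in \D(\K)^{*} \,$. Every $\, m \in M \subseteq L \D(\K) \,$ factors as $\, L \theta \,$ with $\, \theta \in \D(\K) \,$, so $\, M' := L^{-1} M \subseteq \D(\K) \,$ is well defined; left multiplication by $\, L^{-1} \,$ gives an isomorphism $\, M \cong M' \,$ of right $\, \D$-modules, and $\, M' \D(\K) = L^{-1} L \D(\K) = \D(\K) \,$. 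Since $\, M' \,$ is finitely generated over $\, \D \,$ (being isomorphic to the Noetherian $\, M \,$), clearing a common left denominator $\, f \in \O(X)^{*} \,$ of its generators inside $\, \D(\K) = \O(X)^{-1} \D \,$ yields $\, f M' \subseteq \D \,$, still isomorphic to $\, M \,$ and still satisfying $\, (fM') \D(\K) = \D(\K) \,$ because $\, f \,$ is a unit in $\, \D(\K) \,$.

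It remains to show that a right ideal $\, N \subseteq \D \,$ with $\, N \D(\K) = \D(\K) \,$ meets $\, \O(X) \,$ nontrivially. Write $\, 1 = \sum_i n_i \theta_i \,$ with $\, n_i \in N \,$ and, after choosing a common left denominator, $\, \theta_i = g^{-1} D_i \,$ where $\, g \in \O(X)^{*} \,$ and the $\, D_i \in \D \,$ have orders at most some $\, d \,$. The key computation is the identity
\[
D g^{m} \,=\, \sum_{k=0}^{d} \binom{m}{k} \, g^{m-k} \, \ad_g^{k}(D) ,
\]
where $\, \ad_g(X) := [X,g] \,$, valid for every $\, D \in \D_{\le d} \,$; it is proved by induction on $\, m \,$ from $\, Dg = gD + [D,g] \,$ and the fact that $\, \ad_g \,$ strictly lowers order, so vanishes after $\, d \,$ iterations. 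Taking $\, m = d+1 \,$ makes $\, g^{m-k} \,$ divisible by $\, g \,$ in every surviving term, so $\, \theta_i g^{m} = g^{-1}(D_i g^{m}) \in \D \,$ for each $\, i \,$. Right-multiplying $\, 1 = \sum_i n_i \theta_i \,$ by $\, g^{m} \,$ then gives $\, g^{m} = \sum_i n_i (\theta_i g^{m}) \in N \,$, producing the desired nonzero element of $\, N \cap \O(X) \,$.

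The main obstacle is precisely this noncommutative right-denominator calculation. The reduction in the second paragraph is formal once $\, \D(\K) \,$ is recognized as a PID, and in the commutative case $\, \DO \,$ the denseness step trivializes because $\, g^{-1} \,$ commutes past every $\, D_i \,$ and one simply multiplies on the left. The $\, \ad_g$-expansion above is the mechanism by which, in the noncommutative setting, one ``moves'' $\, g^{-1} \,$ from the left of $\, D_i \,$ to the right and so deposits an element of $\, \O(X) \,$ inside $\, N \,$.
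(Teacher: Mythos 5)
Your proof is correct and follows the same basic strategy as the paper's: use the principal ideal domain $\D(\K)$ (resp.\ $\K[\xi]$) to normalize the ideal by a generator of its extension, clear denominators, and conclude that the result is fat. Where the paper asserts without comment that $a^{-1}F$ meets $\K$ and simply cites Stafford for the case of $\D$, you fill in the gap with the $\ad_g$-nilpotency identity --- a valid and self-contained argument (in effect re-deriving the right Ore condition for $\O(X)^{*}$ in $\D$), though the same conclusion also follows immediately from the two-sided Ore localization $\D(\K)=\D\,\O(X)^{-1}$: every element of $N\D(\K)$ has the form $n h^{-1}$ with $n\in N$, $h\in\O(X)$, so the minimal order occurring in $N$ equals that in $N\D(\K)=\D(\K)$, namely $0$.
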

\begin{proof}
We give the proof for $\, \DO \,$: the proof for $\, \D \,$ is the same
(see \cite{St}, Lemma 4.2).
Recall again that $\, \K \,$ is the quotient field of $\, \O(X) \,$.
The algebra
$\, \K \otimes_{\O(X)} \DO \,$ is a principal ideal domain
(because it is a polynomial algebra $\, \K[\xi] \,$:
for $\, \xi \,$ we can take any
nonzero derivation of $\, \K \,$).  Given an ideal $\, F \subseteq \DO \,$,
let $\, a \,$ be a generator of its extension to
$\, \K \otimes_{\O(X)} \DO \,$.
Then the fractional ideal $\, a^{-1}F \,$ has nonzero intersection
with $\, \K \,$; multiplying by a suitable element of $\, \O(X) \,$ we
get a fat (integral) ideal isomorphic to $\, F \,$.
\end{proof}
\begin{lemma}
\la {fati}
Let $\, L \,$ be an invertible fat ideal in  $\, \DO \,$, and let
$\, I := L \cap \O(X) \,$.  Then $\, L = I \DO \,$.
\end{lemma}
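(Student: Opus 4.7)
The plan is to compare $L$ with $I\DO$ at every height-one prime of $\DO$. Since $T^*X$ is a smooth affine surface, $\DO = \O(T^*X)$ is a two-dimensional Noetherian normal domain, and any invertible ideal equals the intersection of its localizations at height-one primes inside $\Frac \DO$. So if $L_{\mathfrak{p}} = (I\DO)_{\mathfrak{p}}$ for every height-one prime $\mathfrak{p}$, then $L = I\DO$.

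First, I would classify the height-one primes of $\DO$. Call $\mathfrak{p}$ \emph{vertical} if $\mathfrak{p} \cap \O(X) \neq 0$ and \emph{horizontal} otherwise. In the vertical case, $\mathfrak{p} \cap \O(X)$ is a nonzero prime in the Dedekind domain $\O(X)$, hence a maximal ideal $\mathfrak{m}_x$ for some $x \in X$; the extended ideal $\mathfrak{p}_x := \mathfrak{m}_x \DO$ is itself prime of height one (its quotient $\DO/\mathfrak{p}_x$ is the coordinate ring of the fiber $p^{-1}(x) \cong \A^1$, a one-dimensional domain), so $\mathfrak{p}_x \subseteq \mathfrak{p}$ and equal heights force $\mathfrak{p} = \mathfrak{p}_x$.

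Second, pick any nonzero $f \in L \cap \O(X) = I$. At a horizontal $\mathfrak{p}$, $f \notin \mathfrak{p}$, so $f$ is a unit in $\DO_{\mathfrak{p}}$; since $f$ lies in both $L$ and $I\DO$, both $L_{\mathfrak{p}}$ and $(I\DO)_{\mathfrak{p}}$ equal $\DO_{\mathfrak{p}}$. At a vertical prime $\mathfrak{p}_x$, the local ring $\DO_{\mathfrak{p}_x}$ is a DVR whose maximal ideal is generated by any uniformizer $\pi$ of $\O(X)_{\mathfrak{m}_x}$; invertibility of $L$ then gives $L_{\mathfrak{p}_x} = \pi^{n_x}\DO_{\mathfrak{p}_x}$ for some $n_x \geq 0$, with $n_x = 0$ for all but finitely many $x$. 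An element $g \in \O(X)$ now lies in $L$ iff it lies in $L_{\mathfrak{p}}$ for every height-one $\mathfrak{p}$, which is automatic at horizontal primes and amounts to $v_{\mathfrak{m}_x}(g) \geq n_x$ at each $\mathfrak{p}_x$; hence $I = \prod_x \mathfrak{m}_x^{n_x}$, an invertible ideal of $\O(X)$, and $(I\DO)_{\mathfrak{p}_x} = \pi^{n_x}\DO_{\mathfrak{p}_x} = L_{\mathfrak{p}_x}$. Combined with the horizontal case, $L$ and $I\DO$ agree at every height-one prime, so $L = I\DO$.

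The main obstacle is purely setting up the height-one localization framework---identifying the vertical primes as exactly the $\mathfrak{p}_x$ and verifying that $L$ (being invertible) is pinned down by these local data---but nothing deep is needed beyond the smoothness of $T^*X$ and the local principality of $L$. No analytic input is required; everything follows from the algebraic structure of the projection $p : T^*X \to X$.
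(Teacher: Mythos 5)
Your proof is correct and takes essentially the same approach as the paper's: where the paper compares divisors of zeros (noting that $I\DO \subseteq L$ forces the divisor of $L$ to be supported on fibres of $p$, whence $L = J\DO$ and then $J = L \cap \O(X) = I$), you carry out the same comparison by explicit localization at height-one primes of $\DO$, split into the vertical ones $\mathfrak{m}_x\DO$ and the horizontal ones at which a fat ideal is automatically trivial. The divisor language and the height-one localization encode identical information, so this is a more detailed rendering of the same argument.
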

\begin{proof}
Clearly, $\, I \DO \subseteq L \,$.  The divisor of zeros of $\, I \DO \,$
is a linear combination of fibres of the projection
$\, p : T^*X \to X \,$; the divisor of zeros of $\, L \,$ must be
smaller, that is, $\, L = J \DO \,$ for some $\, J \supseteq I \,$.
But then $\, L \cap \O(X) = J \,$, so $\, J = I \,$.
\end{proof}
\begin{remark}
\la{picis}
We have essentially given a proof of the fact (which we used earlier)
that the map $\, p^* : \Pic X \to \Pic(T^*X) \,$ is an isomorphism. Indeed,
$\, p^* \,$ is obviously injective,
because if $\, i : X \to E \,$ is the inclusion of the zero section,
then $\, pi = \id_X \,$, so $\, i^* p^* \,$ is the identity on
$\, \Pic X \,$.  Lemmas \ref{fat} and \ref{fati} show
that $\, p^* \,$ is surjective.
\end{remark}
\begin{proposition}
\la{fatI}
Let $\, F \,$ be a fat ideal in $\, \DO \,$. Then there is a unique
ideal $\, I \,$ in $\, \O(X) \,$ such that $\, F \,$ has finite
codimension in $\, I \DO \,$.  Furthermore, $\, \gammaO (F) = (I) \,$.
\end{proposition}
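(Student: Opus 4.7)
The plan is to exhibit $\, I \,$ as $\, F^{**} \cap \O(X) \,$ and to observe that the hypothesis ``$F$ has finite codimension in $I\DO$'' forces $\, I\DO = F^{**} \,$, which simultaneously handles uniqueness and pins down $\, \gammaO(F) \,$ on the nose.

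First, for existence, I would invoke the general remarks at the start of Section~\ref{commsec}: since $\, T^*X \,$ is a smooth surface and $\, F \,$ is rank $1$ torsion-free, the bidual $\, F^{**} \,$ is invertible and contains $\, F \,$ with cokernel of finite length.  Because $\, F \subseteq \DO \,$ is fat, so is $\, F^{**} \,$; here $\, F^{**} \,$ is naturally a subideal of $\, \DO \,$ because it agrees with $\, F \,$ off a codimension-$2$ set and $\, \DO \,$ is normal, so the inclusion extends across.  Lemma~\ref{fati} then yields $\, F^{**} = I\DO \,$ with $\, I := F^{**} \cap \O(X) \,$, and $\, F \,$ has finite codimension in $\, I\DO \,$ by construction.

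For uniqueness, suppose $\, F \,$ has finite codimension in $\, J\DO \,$ for some ideal $\, J \subseteq \O(X) \,$.  Applying $\, \Hom_{\DO}(-,\DO) \,$ to $\, F \hookrightarrow J\DO \,$, the cokernel $\, T \,$ is supported in codimension $2$ on a smooth surface, so $\, \mathrm{Ext}^i(T,\DO) = 0 \,$ for $\, i \leq 1 \,$; hence $\, (J\DO)^* \to F^* \,$ is an isomorphism, and dualizing once more gives an inclusion $\, F^{**} \subseteq (J\DO)^{**} = J\DO \,$ with quotient supported in codimension $\geq 2$.  But two invertible subsheaves of $\, \O_{T^*X} \,$ whose quotient is torsion of codimension $\geq 2$ must coincide (locally their ratio is a regular function whose divisor has no codimension-$1$ part, hence is a unit).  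Thus $\, J\DO = F^{**} \,$, and intersecting with $\, \O(X) \,$ gives $\, J = I \,$.  The final assertion $\, \gammaO(F) = (I) \,$ is then immediate from the very definition of $\, \gammaO \,$, since $\, \gammaO(F) \,$ is by construction the class $\, (I') \,$ for which $\, F^{**} \cong I'\DO \,$.

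The main obstacle is the step from an abstract isomorphism $\, F^{**} \cong J\DO \,$ to the equality $\, F^{**} = J\DO \,$ as subideals of $\, \DO \,$: this is what forces the codimension-$1$ versus codimension-$2$ comparison on the smooth surface $\, T^*X \,$ and relies on normality together with the principle that invertible ideals are determined by their behaviour in codimension $1$.
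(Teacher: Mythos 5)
Your proof is correct and follows essentially the same line as the paper's: realize $F^{**}$ as a fat ideal in $\DO$, apply Lemma~\ref{fati} to get $F^{**} = I\DO$, and reduce uniqueness to the observation that two invertible ideals of $\O(T^*X)$ with finite-dimensional quotient must coincide. Your detour through $\mathrm{Ext}^i(T,\DO)=0$ for $i\le 1$ is a valid but unnecessary elaboration; the paper simply takes biduals of $F\subseteq J\DO$ to obtain $I\DO\subseteq J\DO$ with finite-dimensional quotient and concludes $I=J$ directly from that.
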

\begin{proof}
We may realize the dual module $\, F^* \,$ of $\, F \,$ as the
fractional ideal
$$
F^* \,=\, \{ f \in \Frac(\DO) : fF \subseteq \DO \} \;.
$$
With this understanding, the map $\, F \mapsto F^* \,$ reverses inclusions,
and the canonical inclusion $\, F \hookrightarrow F^{**} \,$ becomes
an inclusion of ideals in $\, \DO \,$. If $\, F \,$ is fat, so is
$\, F^{**} \,$, so Lemma~\ref{fati} gives $\, F^{**} = I\DO \,$, where
$\, I := F^{**} \cap \O(X) \,$. This shows existence of the ideal $\, I \,$ in
the proposition.  To see uniqueness,
suppose that also $\, F \subseteq J\DO \,$
with finite codimension.  Taking biduals, we get
$\, F \subseteq F^{**} \!= I\DO \subseteq J\DO \,$.
Thus $\, I \subseteq J \,$
and $\, J\DO / I\DO \,$ has finite dimension, which is impossible
unless $\, I = J \,$.  Finally, the last assertion in the Proposition
comes from the fact that $\, \gammaO \,$ can be defined by the property
(iv) in Proposition~\ref{4}.
\end{proof}

Clearly, if $\, M \,$ is fat, so is $\, \MO \,$.  So
Proposition~\ref{fatI} implies:
\begin{theorem}
\la{fatM}
Let $\, M \subseteq \D \,$ be a fat ideal.  Then there is a unique ideal
$\, I \,$ in $\, \O(X) \,$ such that $\, \MO \,$ has finite codimension
in $\, I\DO \,$.  Furthermore, $\, I \DO = \MO^{**} \!$, and
$\, \gamma(M) = (I) \,$ in $\, \Pic X \,$.
\end{theorem}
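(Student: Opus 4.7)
The plan is to reduce Theorem~\ref{fatM} directly to Proposition~\ref{fatI}, which already does essentially all of the work at the level of the associated graded ring. The only bridge needed is to pass from filtered to graded, and to import the commutativity of the diagram \eqref{D1} from Theorem~\ref{comm}.

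First, I would verify the parenthetical remark that $\MO$ is fat whenever $M$ is: if $0 \neq f \in M \cap \O(X)$, then $f$ is an operator of order zero, so its principal symbol $\gr(f) \in \DO$ is again $f$ itself (viewed in degree zero of $\DO = \O(T^*X)$). Thus $0 \neq f \in \MO \cap \O(X)$, and $\MO$ is fat.

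Next, I would apply Proposition~\ref{fatI} directly to the fat ideal $\MO \subseteq \DO$. That gives a unique ideal $I \subseteq \O(X)$ such that $\MO$ has finite codimension in $I\DO$, together with the identification $I\DO = \MO^{**}$ and the equality $\gammaO(\MO) = (I)$ in $\Pic X$. The uniqueness, existence, and these identifications therefore transfer verbatim to the statement in Theorem~\ref{fatM}.

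Finally, to obtain the last assertion $\gamma(M) = (I)$, I would invoke the commutativity of diagram \eqref{D1} proved in Theorem~\ref{comm}: by construction $\gr$ sends the class $(M) \in \R$ to $(\MO) \in \RO$, and then $\gamma(M) = \gammaO(\gr(M)) = \gammaO(\MO) = (I)$. There is no real obstacle here; the entire content lives in Proposition~\ref{fatI} and in Theorem~\ref{comm}, and the only thing to check by hand is the triviality that fatness is preserved by taking associated gradeds, which is immediate from the fact that $\O(X) \subseteq \D$ sits in filtration degree zero and injects into $\DO$ in the expected way.
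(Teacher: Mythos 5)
Your proposal is correct and takes essentially the same route as the paper: the paper obtains Theorem~\ref{fatM} immediately from Proposition~\ref{fatI} via the observation that fatness passes to the associated graded, with the final equality $\gamma(M)=(I)$ coming from the commutativity of diagram~\eqref{D1} (Theorem~\ref{comm}). Your spelling out of why $\MO$ is fat (that $\O(X)=\D_0$ sits in filtration degree zero, so $M\cap\O(X)$ injects into $\MO_0\subseteq\DO$) is exactly the right justification for the paper's ``Clearly.''
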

\section{The Cannings-Holland correspondence}
\la{CHsec}
Recall from \cite{CH1} that a subspace $\, V \subseteq \O(X) \,$
is called {\it primary} (more precisely, {\it $x$-primary})
if it contains a power of the maximal ideal $ \m_x $ of functions that
vanish at a point $ x \in X $
(we write $ V = V_x $ in this case).
Now, $\, V \subseteq \O(X) $ is called {\it  primary decomposable}
if it is an intersection of primary subspaces $ V_x $ with $ V_x = \O(X) $
for almost all $ x \in X $.
By \cite{CH1}, Theorem 2.4, the primary decomposition of $ V $ is uniquely
determined. If $\, V \,$ is a primary decomposable subspace
of $\, \O(X) \,$, we set
$$
M_V \,:=\, \{D \in  \D \,:\, D.\O(X) \subseteq V \}
$$
(here $\, D.f \,$ denotes the function obtained by letting $\, D \,$
act on $\, f \,$, not to be confused with the operator
$\, Df \in \D \,$).  Clearly, $\, M_V \,$ is a right ideal in
$\, \D \,$.  The main result of \cite{CH1} was the following.
\begin{theorem}
\la{CHth}
The map $\, V \mapsto M_V \,$ is a $1$-$1$ correspondence between the
lattice of primary decomposable subspaces of $\, \O(X) \,$ and the
lattice of fat right ideals in $\, \D \,$.  The inverse map is
given by
$\, M \mapsto M.\O(X) \,$.
\end{theorem}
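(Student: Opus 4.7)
The plan is to split the proof into three tasks: well-definedness of both maps, the two mutual-inverse identities $M_V.\O(X) = V$ and $M_{M.\O(X)} = M$, and lattice compatibility. The main obstacle will be the reverse inclusions in the inverse identities; the rest is essentially formal.

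For well-definedness, $M_V$ is manifestly a right ideal, since $(DE).\O(X) = D.(E.\O(X)) \subseteq V$ whenever $D \in M_V$ and $E \in \D$. For fatness of $M_V$, the primary decomposition $V = \bigcap_i V_{x_i}$ with $V_{x_i} \supseteq \m_{x_i}^{n_i}$ forces the honest ideal $I_0 := \prod_i \m_{x_i}^{n_i}$ to lie in $V$; any $f \in I_0$, viewed as a multiplication operator, satisfies $f.\O(X) = f\O(X) \subseteq I_0 \subseteq V$, giving $I_0 \subseteq M_V \cap \O(X)$. Conversely, for $M$ fat with $I_0 := M \cap \O(X) \neq 0$, the inclusion $I_0 \subseteq V := M.\O(X)$ (apply $h \in I_0$ to $1$) shows $V$ has finite codimension in the Dedekind ring $\O(X)$; applying the Chinese Remainder Theorem to $\O(X)/I_0 \cong \bigoplus_i \O(X)/\m_{x_i}^{n_i}$ then yields $x_i$-primary subspaces $V_{x_i}$ with $V = \bigcap_i V_{x_i}$.

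The trivial inclusions $M_V.\O(X) \subseteq V$ and $M \subseteq M_{M.\O(X)}$ follow from the definitions, and both maps are order-preserving, so lattice compatibility is automatic once bijectivity is established. For the reverse inclusion $V \subseteq M_V.\O(X)$, I would argue locally: given $v \in V$, construct for each $i$ an operator $D_i \in \D$ with $D_i.1 \equiv v \pmod{\m_{x_i}^{n_i}}$ and $D_i.\O(X) \subseteq \m_{x_j}^{n_j}$ for $j \neq i$, using the Weyl-algebra-like local structure of $\D$ near $x_i$. Then $D := \sum_i D_i$ lies in $M_V$ and satisfies $v - D.1 \in I_0 \subseteq M_V \cap \O(X) \subseteq M_V.\O(X)$, so $v \in M_V.\O(X)$. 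For the reverse inclusion $M_{M.\O(X)} \subseteq M$, the key observation is that $M.\O(X) = M.1$, because for $D \in M$ and $f \in \O(X)$ we have $Df \in M$ with $(Df).1 = D.f$. Thus if $D \in M_{M.\O(X)}$, then $D.1 \in M.1$, so there exists $D_0 \in M$ with $D.1 = D_0.1$; replacing $D$ by $D - D_0$ reduces the problem to showing that an operator annihilating $1$ whose whole $\O(X)$-orbit lies in $V$ must already lie in $M$. I expect the hardest step to be the closure of this reduction: one inducts on the order of $D$, and to make the induction go through one must control the symbols using Theorem~\ref{fatM}—since $M$ and $M_{M.\O(X)}$ are both fat with the same associated ideal $I \in \Pic X$, their graded modules $\MO \subseteq \overline{M_{M.\O(X)}} \subseteq I\DO$ agree in sufficiently many degrees to force equality, whence $M = M_{M.\O(X)}$.
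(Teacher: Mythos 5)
This theorem is not proved in the paper: it is quoted verbatim from Cannings--Holland \cite{CH1} (``The main result of \cite{CH1} was the following.''), so there is no in-paper proof to compare your attempt against. Judged on its own terms, your proposal has the right overall shape (well-definedness, two inverse identities, lattice compatibility) but contains several genuine gaps at exactly the points you flag as hard.

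First, the passage from ``$M.\O(X)$ contains the nonzero ideal $I_0 = M\cap\O(X)$'' to ``$M.\O(X)$ is primary decomposable'' is not a consequence of the Chinese Remainder Theorem alone. Having finite codimension and containing a product $\prod\m_{x_i}^{n_i}$ does \emph{not} make a subspace primary decomposable: for example, in $\c[x]$ the subspace $\c(1+x)+x(x-1)\c[x]$ contains $x(x-1)\c[x]$ and has codimension one, yet contains no power of either $\m_0$ or $\m_1$, so it is not an intersection of primary subspaces. One must actually use that $M$ is a \emph{right ideal} of $\D$ (not merely a subspace) to show $M.\O(X)$ is stable under the relevant idempotent decomposition; this is a real lemma, not bookkeeping.

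Second, and more seriously, the closing step of $M_{M.\O(X)}\subseteq M$ is not an argument but a hope. You reduce, correctly, to showing that an operator $D$ with $D.1=0$ and $D.\O(X)\subseteq M.\O(X)$ lies in $M$; you then invoke Theorem~\ref{fatM} to say that $\MO$ and $\overline{M_{M.\O(X)}}$ ``agree in sufficiently many degrees to force equality.'' But all Theorem~\ref{fatM} gives you is that $\MO^{**}=I\DO$; it provides no control whatsoever on the codimension of $\MO$ or of $\overline{M_{M.\O(X)}}$ inside $I\DO$, and those codimensions need not match a priori. A dimension-count of this kind is essentially Lemma~\ref{vhard}, whose proof in the paper \emph{relies} on results of \cite{CH1} that presuppose the very correspondence you are trying to prove, so that route is circular. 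Likewise, the construction of the operators $D_i$ with prescribed behavior at the points $x_i$, invoked for the inclusion $V\subseteq M_V.\O(X)$, is where the actual work of \cite{CH1} lives (local analysis of $\D$ via a uniformizer) and is left entirely unexecuted here. In short: the skeleton is reasonable, but the two ``hard'' steps you identify are genuinely hard, and the sketch does not close either of them.
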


Theorem~\ref{fatM} associates to each fat ideal
$\, M \subseteq \D \,$ an ideal $\, I \,$ in $\, \O(X) \,$ (namely,
the ideal such that $\, \MO^{**} \!= I \DO \,$).  If $\, V \,$ is a
primary decomposable subspace of $\, \O(X) \,$ we write $\, I_V \,$
for the ideal associated in this way to $\, M_V \,$. The relationship
of this construction to Theorem~\ref{CHth} is as follows.
\begin{theorem}
\la{hard}
Let $\, V \,$ be the intersection of $x$-primary subspaces $\, V_x \,$,
and let $\, n_x \,$ be the codimension of $\, V_x \,$ in $\, \O(X) \,$.
Then we have
\begin{equation}
\la{div}
I_V \,= \, \prod_{x \in X} \m_{x}^{n_x} \ .
\end{equation}
\end{theorem}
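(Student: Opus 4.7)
The plan is to verify \eqref{div} locally at each $x \in X$, reducing to a computation in the formal completion and invoking Theorem~\ref{fatM} to identify $I_V$ via the bidual of $\overline{M_V}$. Both sides of \eqref{div} are ideals of the Dedekind ring $\O(X)$, so it suffices to show $\mathrm{ord}_x(I_V) = n_x$ for every $x$. For $x \notin \supp V$ we have $V_x = \O(X)$ locally, so $M_V$ coincides with $\D$ near $x$ and the assertion is trivial. Fix $x \in \supp V$, choose a local coordinate $t$, and pass to completions: $\hat{\O}_x = \c[[t]]$, $\hat{\D}_x = \c[[t]][\partial]$, and $\hat{V}_x \subseteq \c[[t]]$ of codimension $n := n_x$. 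By Theorem~\ref{fatM} the task becomes $\overline{\hat{M}_{V}}^{\,**} = (t^n)\c[[t]][\xi]$. Since $\c[[t]][\xi]$ is a UFD and $\overline{\hat{M}_{V}}$ is fat (it contains the conductor $t^c$ of $\hat{V}_x$), its bidual is a principal ideal $(t^m)$, where $m$ is the smallest $t$-valuation of an element of $\overline{\hat{M}_V}$; it remains to prove $m = n$.

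For $m \geq n$ -- equivalently, that the leading coefficient $a_k$ of any $D = a_k\partial^k + \ldots \in \hat{M}_{V}$ lies in $(t^n)$ -- one uses the $n$-dimensional annihilator
\[
V^{\perp} \,:=\, \{E \in \c[\partial] : (E\cdot f)(0) = 0 \text{ for all } f \in \hat{V}_x\}.
\]
A direct calculation with the pairing $\langle f, E\rangle := (E\cdot f)(0)$, together with the elementary identity $\{H \in \hat{\D}_x : H \cdot \c[[t]] \subseteq (t)\} = (t)\hat{\D}_x$, gives the characterization
\[
\hat{M}_{V} \,=\, \{D \in \hat{\D}_x : V^{\perp} \cdot D \subseteq (t)\hat{\D}_x\}.
\]
Choose a basis $E_1,\ldots,E_n$ of $V^{\perp}$ with $E_{\alpha}$ of pure degree $s_{\alpha}$ and leading coefficient $1$, where $S = \{s_1 < \cdots < s_n\}$ is the gap set of $\hat{V}_x$ in the $(t)$-adic filtration. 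One proves by induction on $p = 0,1,\ldots,n-1$ that $a_k^{(p)}(0) = 0$: reading off the coefficient of $\partial^{k + s_{\alpha} - p}$ in $E_{\alpha}D$ and evaluating at $t = 0$, all previously zeroed derivatives of $a_k, a_{k-1},\ldots$ drop out, leaving the equation
\[
\sum_{j=0}^{p} \binom{s_{\alpha}}{p-j}\, a_{k-j}^{(p-j)}(0) \,=\, 0.
\]
The $n$ such equations (one per $\alpha$) form a Vandermonde-type matrix in the $n$ distinct values $s_1,\ldots,s_n$, whose first $p+1$ rows have nonzero determinant; the only solution is $a_k^{(p)}(0) = \cdots = a_{k-p}^{(0)}(0) = 0$. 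Taking $p = 0,1,\ldots,n-1$ in turn gives $a_k \in (t^n)$.

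For $m \leq n$ it suffices to exhibit one operator in $\hat{M}_{V}$ whose principal symbol has $t$-valuation exactly $n$. When $\hat{V}_x$ is graded (spanned by those $t^j$ with $j \notin S$), the polynomial $p(E) := \prod_{s \in S}(E - s)$ in $E = t\partial$ acts on $t^j$ as multiplication by $p(j)$, which vanishes for $j \in S$ and leaves $t^j \in \hat{V}_x$ otherwise; its principal symbol is $(t\xi)^n = t^n\xi^n$. For general (non-graded) $\hat{V}_x$, one adds to $p(E)$ corrections of lower $E$-degree, whose free parameters are indexed by a basis of $W := \c[[t]]/\hat{V}_x$, producing an element of $\hat{M}_{V}$ with the same principal symbol. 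Combining the two inequalities gives $m = n$, hence $\mathrm{ord}_x(I_V) = n_x$, and assembling the contributions over all $x$ yields \eqref{div}. The main obstacle is the upper-bound step: one has to verify that the nonleading terms of the $E_{\alpha}$ and the lower-order coefficients of $D$ all drop out of the equation at level $p$ thanks to the induction hypothesis, so that the Vandermonde matrix in the distinct gaps $s_1,\ldots,s_n$ is what actually controls the vanishing of $a_k^{(0)}(0),\ldots,a_k^{(n-1)}(0)$.
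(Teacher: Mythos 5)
Your proposal takes a genuinely different route from the paper. The paper never localizes: it first proves a global dimension identity $\,\dim(\O/V)=\dim(\O/I_V)\,$ (Lemma~\ref{vhard}), whose proof passes through the exact sequence \eqref{exact} and the $1$-length machinery of Smith--Stafford together with \cite{CH1}, Corollary~3.8. For a single $x$-primary $V_x$, the inclusion $\,\m_x^N\D\subseteq M_x\,$ gives $\,I_x=\m_x^k\,$ with $k\le N$ and the dimension identity then forces $k=n_x$; the general case is deduced from the primary decomposition via a further dimension count using \cite{CH1}, Theorem~2.4. Your argument is instead entirely local: complete at each $x$, turn the condition $D\cdot\O\subseteq V$ into $V^\perp D\subseteq(t)\hat\D_x$ via the finite duality pairing, and run a hands-on induction with Vandermonde determinants in the gaps $s_1,\dots,s_n$ to show that the leading coefficient of any $D\in\hat M_V$ lies in $(t^n)$; the reverse inequality comes from exhibiting an explicit operator $p(t\d)=\prod_{s\in S}(t\d-s)$ (suitably perturbed in the non-graded case). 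Both approaches are correct. Yours is more elementary and self-contained (no appeal to $1$-length or to \cite{CH1}, Theorem~2.4) and handles primary and general $V$ uniformly; the paper's is shorter because it offloads the real work to the cited lemmas.

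There are, however, several points where your sketch needs to be firmed up before it is a complete proof. (a)~The reduction to the completion requires an argument: one must check that forming $\,M_V$, passing to the associated graded, and taking the bidual all commute with the flat base change $\,\O(T^*X)\to\c[[t]][\xi]\,$ (this is true for finitely generated modules over Noetherian rings, but you should say so), and that for $y\ne x$ the $y$-primary components disappear after completing at $x$. (b)~In the Vandermonde step the induction hypothesis that makes the lower-order terms of the $E_\alpha$ ``drop out'' is not just $a_k^{(q)}(0)=0$ for $q<p$, but the full package $a_{k-j}^{(i)}(0)=0$ for all $i+j<p$ obtained by accumulating the conclusions of all earlier steps; this should be stated as the actual inductive claim. (c)~The upper bound in the non-graded case is genuinely hand-waved: ``adds corrections of lower $E$-degree whose free parameters are indexed by a basis of $W$'' is the right idea, but one must verify that the resulting finite linear system for the correction coefficients is solvable, for instance by a triangularity/dimension count, or replace it by a softer argument (e.g.\ using the fact that $\hat V_x\supseteq(t^N)$ so $\MO_V\supseteq(t^N)\c[[t]][\xi]$, together with the finite codimension of $\MO_V$ in its bidual, to bound $m$ from above). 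With these gaps filled the argument is a valid and more explicit alternative to the one in the text.
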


The proof rests on the following lemma.
\begin{lemma}
\la{vhard}
Let $\, V \subseteq \O(X) \,$ be primary decomposable,
$\, I_V \subseteq \O(X) \,$ the corresponding ideal.  Then
\begin{equation}
\la{dims}
\dim(\O/V) \,=\,  \dim(\O/I_V) \ .
\end{equation}
\end{lemma}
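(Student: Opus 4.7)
The plan is to reduce to a local (completed) computation at each support point of $V$, and then to compute the codimension of a filtered piece of $M_V$ in two different ways.

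First, since $V$ has finite support $\{x_1,\ldots,x_k\}$, and since the construction $V \mapsto M_V \mapsto \overline{M_V}^{**} = I_V\DO$ is compatible with localization on $X$, it suffices to prove the equality after completion at each $x$. Write $\hat\O_x \cong \c[[t]]$ for the complete local ring, let $\hat V \subseteq \hat\O_x$ be the completed primary component (of codimension $n = n_x$, containing $\hat\m^N$ for some $N$), and $\hat I_V = (t^m)$ since $\hat\O_x$ is a DVR. The goal reduces to showing $m = n$.

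Second, I compute $\dim_\c\!\bigl(\hat\D_{\leq d}/(\hat M_V \cap \hat\D_{\leq d})\bigr)$ in two independent ways, using the filtration by operator order. On one hand, a graded-piece analysis gives
\[
\dim_\c\!\bigl(\hat\D_{\leq d}/(\hat M_V \cap \hat\D_{\leq d})\bigr) \;=\; \sum_{i=0}^{d} m_i,
\]
where $J_i = (t^{m_i})$ is the ideal of leading coefficients of order-$i$ operators in $\hat M_V$, and $m_0 \geq m_1 \geq \cdots$ stabilizes to $m$ (by the local version of Theorem~\ref{fatM}). On the other hand, since $D\cdot \hat\m^{N+d} \subseteq \hat\m^{N} \subseteq \hat V$ for any $D \in \hat\D_{\leq d}$, the defining condition for $\hat M_V$ factors through $\hat\O_x/\hat\m^{N+d}$, giving an injection
\[
\hat\D_{\leq d}/(\hat M_V \cap \hat\D_{\leq d}) \;\hookrightarrow\; \Hom_\c\!\bigl(\hat\O_x/\hat\m^{N+d},\; \hat\O_x/\hat V\bigr),
\]
whose target has $\c$-dimension $(N+d)n$.

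Granting that this injection is an isomorphism for all $d \gg 0$, equating the two counts yields $\sum_{i=0}^{d} m_i = (N+d) n$; subtracting consecutive identities gives $m_d = n$ for large $d$, whence $m = n$, proving the lemma. The main obstacle is the surjectivity in the second count: every $\c$-linear map $\hat\O_x/\hat\m^{N+d} \to \hat\O_x/\hat V$ must be realized as $f \mapsto D\cdot f \bmod \hat V$ for some $D \in \hat\D_{\leq d}$. This is a finite linear-algebra assertion about the image of order-$\leq d$ operators in $\End_\c(\hat\O_x/\hat\m^{N+d})$; it can be verified by exhibiting sufficiently many explicit monomial operators $t^i\partial^j$, once $d$ is large relative to $N$ and to the ``gap sequence'' of $\hat V$.
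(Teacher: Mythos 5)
Your proposal takes a genuinely different route from the paper's: the paper proceeds abstractly via the exact sequence $0\to I_V\DO/\MO_V\to\DO/\MO_V\to\DO/I_V\DO\to 0$, the additivity of the Smith--Stafford $1$-length, and the two translation results $\ol(\DO/\MO_V)=\len(\D/M_V)$ (\cite{SS}, Lemma~3.10) and $\len(\D/M_V)=\dim(\O/V)$ (\cite{CH1}, Cor.~3.8). Your proposal instead tries to prove the identity by an explicit local dimension count. Unfortunately, the step you yourself flag as ``the main obstacle'' is not merely unproven --- it is \emph{false} as stated.

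Concretely, take $V=\m_x^N$ with $N\ge 2$ (so $n=N$). Then $\hat M_V=\hat\m^N\hat\D$, so $\hat M_V\cap\hat\D_{\le d}=\hat\m^N\hat\D_{\le d}$ and the left-hand side of your claimed isomorphism has $\c$-dimension
\[
\dim_\c\bigl(\hat\D_{\le d}/\hat\m^N\hat\D_{\le d}\bigr)=N(d+1),
\]
since $\hat\D_{\le d}$ is a free $\hat\O_x$-module of rank $d+1$. The target $\Hom_\c(\hat\O_x/\hat\m^{N+d},\hat\O_x/\hat V)$ has dimension $(N+d)N$. For $N\ge 2$ these differ by $N(N-1)>0$ for \emph{every} $d$, so the map is never surjective, however large $d$ is. The injectivity half of your argument is fine (and does give $m\le n$ in the notation of your sketch), but equality requires a lower bound on $\sum_{i\le d} m_i$, and the naive ``one condition per $j<N+d$ per basis vector of $\hat\O/\hat V$'' count overcounts: the conditions $\{D\cdot t^j\equiv 0\pmod{\hat V}\}_{j}$ are not linearly independent in general, as the example shows.

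Note also that the conclusion you need, $m_d=n$ for $d\gg 0$, only involves the \emph{difference} of consecutive codimensions, so what you actually need is that the cokernel of your injection has \emph{eventually constant} dimension, not that it vanishes. That is a weaker and plausibly true statement, but it requires a real argument (some form of stabilization of the defect), and your sketch of exhibiting ``sufficiently many monomial operators $t^i\partial^j$'' does not supply one --- indeed it cannot supply full surjectivity, since surjectivity fails. A further, more minor gap: the reduction to the completed local ring is asserted but not justified; one must check that passing to $\gr$, taking biduals, and intersecting with $\O(X)$ all commute with localization/completion, which is true but deserves a line. As it stands, the proposal does not prove the lemma.
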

\begin{proof}
We start with the exact sequence of $\, \DO$-modules
\begin{equation}
\la{exact}
0 \,\to\, I_V \DO / \MO_V \,\to\, \DO / \MO_V
\,\to\, \DO / I_V \DO \,\to\, 0 \; .
\end{equation}
Recall from \cite{SS} that the $1$-{\it length} of a $\, \DO$-module is
the maximum length of a chain of submodules such that the successive
quotients are infinite-dimensional (over $\, \c \,$).  Since the $1$-length
is additive, and the first term in \eqref{exact} has finite dimension
(and hence $1$-length $0$), we get
$$
\ol(\DO / \MO_V) \,=\, \ol(\DO / I_V \DO) \ .
$$
By \cite{SS}, Lemma~3.10, this is equivalent to
$$
\len(\D / M_V) \,=\, \len(\D / I_V \D) \ .
$$
By \cite{CH1}, Corollary~3.8, this in turn is equivalent to \eqref{dims}.
\end{proof}

Now we can give the proof of Theorem~\ref{hard}. To simplify the
notation, if $\, V_x \,$ is $x$-primary, we shall write simply
$\, M_x \,$ and $\, I_x \,$ for the associated ideals in $\, \D \,$
and in $\, \O(X) \,$.
\begin{proof}[Proof of Theorem~\ref{hard}]
We consider first the case where $\, V \equiv V_x \,$ is $x$-primary:
thus $\, \m_{x}^N \subseteq V_x \,$ for some $\, N \,$;
it follows that $\, \m_{x}^N \D \subseteq  M_x \,$.
Passing to the associated graded modules, we get
$$
\m_{x}^N \DO \subseteq \MO_x \subseteq \MO_x^{**} = I_x\DO \, ,
$$
and hence  $\, \m_{x}^N \subseteq I_x \,$.
By the uniqueness of factorization
of ideals in the Dedekind domain $\, \O(X) \,$, this means that
$\, I_x = \m_{x}^k \,$ for some $\, k \leq N \,$.  Lemma~\ref{vhard}
then gives that $\, n_x = k  \,$; that is, we have proved Theorem~\ref{hard}
in the case where $\, V \,$ is $x$-primary.

In general, suppose that $\, V \,$ is the intersection of primary
subspaces $\, V_x \,$; then we have
$\, V \subseteq V_x \,$ for each $\,x \,$.
It follows that $\, M_V \subseteq M_x \,$, and hence
$\, \MO_V \subseteq \MO_x \,$.  Taking biduals (inside the
field of fractions of $\, \DO \,$, as in the proof of Theorem~\ref{fatI})
we get
$\, I_V \DO \subseteq I_x \DO \,$, and hence
$\, I_V \subseteq I_x = \m_{x}^{n_x} \,$.  This means that if the
factorization of $\, I_V \,$ is given by $\, I_V = \prod \m_{x}^{k_x} \,$,
then $\, n_x \leq k_x \,$ for all $\, x \,$.  But by \cite{CH1}, Theorem 2.4,
the left hand side of \eqref{dims} is equal to $\, \sum n_x \,$, so
Lemma~\ref{vhard} says that $\, \sum n_x = \sum k_x \,$.  It follows
that $\, n_x = k_x \,$ for all $\, x \,$.
\end{proof}

Theorem~\ref{hard} is in essence a more precise version of
Theorem~\ref{T3} from the Introduction.  To see that, note first that
the fat representative of an ideal class in $\, \R \,$ is unique up to
multiplication by a rational function; that is, $\, \R \,$ is the
quotient of the space of fat ideals by the equivalence
relation\footnote{Our exposition would be smoother at this point if we
worked with {\it fractional} ideals in $\, \D(\K) \,$.}
$$
M_1 \sim M_2 \, \Longleftrightarrow f M_1 = g M_2 \text{\, for some \,}
f,\,g \in \O(X) \;.
$$
Obviously, if $\, f \in \O(X) \,$, then Theorem~\ref{CHth} makes
$\, fV \,$ correspond to $\, f M_V \,$; also, if $\, M \,$ is a fat
ideal, then $\, \overline{fM} = f \MO \,$; hence for any primary
decomposable $\, V \,$ we have $\, I_{fV} = f I_V \,$.  In the usual
correspondence between ideals and divisors, the ideal $\, I_V \,$ in
\eqref{div} corresponds to the divisor $\, \Div(V) \,$ in \eqref{divdef}.
It follows that we have
$$
\Div(fV) = \Div(V) - \Div(f)
$$
(where $\, \Div(f) \,$ is the divisor of zeros of $\, f \,$).  Thus
(as claimed in the Introduction) the map $\, V \mapsto \Div(V) \,$ yields
a well-defined map $\, \Div : \R \to \Pic X \,$. That this map coincides
with $\, \gamma \,$ follows from Theorem~\ref{fatM}.

\appendix
\section{$SK_0$ of algebraic varieties}

Here we provide proofs of a couple of (well known) facts for which
we do not know a convenient reference. We need these facts only for
affine varieties $\, X \,$; however, they are valid for any smooth
quasi-projective variety, indeed, for any regular Noetherian scheme.
We denote by $\, K_0(X) \,$ the
Grothendieck group of vector bundles over
$\, X \,$; in the affine case this is the same
as the Grothendieck group of finitely generated projective $\, \O(X)$-modules.
Taking the determinant of a vector bundle induces a map
$\, \det : K_0(X) \to \Pic X \,$.  Considering also the rank of a vector
bundle, we obtain a map (obviously surjective)
\begin{equation}
\la{rkdet}
\rk \oplus \det \,:\, K_0(X) \to \Z \oplus \Pic X \ .
\end{equation}
The kernel of this map is denoted by $\, SK_0(X) \,$.
If $\, X \,$ is a curve, then $\, SK_0(X) = 0 \,$
(see, for example, \cite{H}, Ch.\ II, Ex.\ 6.11).
In the proof of Proposition~\ref{4},
we used also the fact that if $\, X \,$ is a curve, then
$\, SK_0(T^*X) = 0 \,$. That is a special case of the following.
\begin{proposition}
Let $\, E \,$ be the total space of a vector bundle
over a smooth variety $\, X \,$.
Then $\, SK_0(E) = 0 \,$ if and only if $\, SK_0(X) = 0 \,$.
\end{proposition}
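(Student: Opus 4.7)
The plan is to compare $\, X \,$ and $\, E \,$ via the bundle projection
$\, p: E \to X \,$ and its zero section $\, i: X \to E \,$
(which satisfies $\, pi = \id_X \,$), and to show that $\, p^* \,$
induces isomorphisms on both the kernel and the quotient in the defining
short exact sequence
$$
0 \to SK_0(Y) \to K_0(Y) \xrightarrow{\rk \oplus \det} \Z \oplus \Pic Y \to 0
$$
for $\, Y = X \,$ and $\, Y = E \,$.

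I would invoke two standard facts. \textbf{(a)} The pullback
$\, p^*: K_0(X) \to K_0(E) \,$ is an isomorphism; since $\, X \,$ and
$\, E \,$ are regular, the Cartan map $\, K_0 \to G_0 \,$ is an isomorphism
on each, and the claim is the homotopy invariance of $\, G$-theory along a
vector bundle projection (\cite{Q}). \textbf{(b)} The pullback
$\, p^*: \Pic X \to \Pic E \,$ is injective, because $\, i^* \,$ is a
left inverse.

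Naturality of rank and determinant ($\, \rk(p^*V) = \rk V \,$ and
$\, \det(p^*V) = p^*(\det V) \,$) produces a commutative diagram
\begin{diagram}[small]
K_0(X)                 & \rTo^{p^*}            & K_0(E) \\
\dTo^{\rk \oplus \det} &                       & \dTo^{\rk \oplus \det} \\
\Z \oplus \Pic X       & \rTo^{\id \oplus p^*} & \Z \oplus \Pic E
\end{diagram}
whose top row is an isomorphism and whose bottom row is injective. A
straightforward diagram chase then shows that $\, p^* \,$ restricts to an
injection $\, SK_0(X) \hookrightarrow SK_0(E) \,$, which yields
$\, SK_0(E) = 0 \Rightarrow SK_0(X) = 0 \,$. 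Conversely, given
$\, \alpha \in SK_0(E) \,$, one writes $\, \alpha = p^*\beta \,$ via (a)
and applies $\, i^* \,$ to the equations $\, \rk \alpha = 0 \,$ and
$\, \det \alpha = 0 \,$ to conclude that $\, \beta \in SK_0(X) \,$, so
$\, SK_0(X) = 0 \,$ forces $\, \alpha = 0 \,$.

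The main obstacle is establishing (a) in the stated generality. For
$\, X \,$ affine and $\, E \,$ a trivial bundle of rank $\, r \,$, this
amounts to $\, K_0(\O(X)[\xi_1, \ldots, \xi_r]) \cong K_0(\O(X)) \,$
for the regular ring $\, \O(X) \,$, a classical consequence of the existence
of finite projective resolutions; the general case then follows either by
patching over a trivializing open cover (using Mayer--Vietoris for $\, G_0 \,$)
or by direct appeal to Quillen's theorem.
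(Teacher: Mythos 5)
Your proof is correct and takes essentially the same approach as the paper: the same commutative diagram, the same appeal to homotopy invariance of $K_0$ along a vector bundle projection (Bass/Quillen), and the same use of the zero section $i$ to see that $p^*:\Pic X\to\Pic E$ is injective. The only cosmetic difference is that the paper observes directly that the right vertical map $\id\oplus p^*$ is an isomorphism (surjectivity follows from the diagram, injectivity from $i^*$), so that the kernels of the two horizontal maps are identified at once, whereas you carry out the two implications by separate diagram chases; the content is identical.
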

\begin{proof}
Let $\, p : E \to X \,$ be the projection.  Then we have a commutative
diagram
\begin{equation}
\la{D}
\begin{diagram}[small]
K_0(X)      &\rTo^{\rk \oplus \det} &\Z \oplus \Pic X \\
\dTo^{p^*}  &                       &\dTo_{\id \oplus p^*}\\
K_0(E)      &\rTo^{\rk \oplus \det} &\Z \oplus \Pic E\\
\end{diagram}
\end{equation}
in which the two horizontal arrows are surjective.
By \cite{B}, Theorem 3.2, p.\ 636 in the affine case, or
\cite{Q}, Proposition 4.1, p.\ 128 in general,
the left hand vertical arrow $\, p^* \,$ is an isomorphism.
It follows from the commutativity of the diagram \eqref{D} that the
right hand vertical arrow is surjective, hence it too is an isomorphism
(since it is obviously injective, cf.\  Remark~\ref{picis}).
The Proposition is now obvious.
\end{proof}

As well as $\, K_0(X) \,$ we have the Grothendieck group $\, G_0(X) \,$
of coherent sheaves over $\, X \,$; in the affine case this is the same
as the Grothendieck group of finitely generated $\, \O(X)$-modules.
For {\it smooth} $\, X \,$, the natural map
$\, K_0(X) \to G_0(X) \,$ is an isomorphism (see, for example, \cite{Q},
p.~124), so we may identify these two groups.  We then have
another important fact about $\, SK_0(X) \,$: it is generated by the
classes (in $\, G_0(X) \cong K_0(X) \,$)
of sheaves whose support has codimension $\, \geq 2 \,$.  Here
is a proof of the part of that which we used.
\begin{proposition}
Let $\, \F \,$ be a coherent sheaf over $\, X \,$ whose support
has codimension  $\, \geq 2 \,$.  Then
$\, [\F] \in SK_0(X) \,$.
\end{proposition}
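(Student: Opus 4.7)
The plan is to resolve $\, \F \,$ by a finite complex of vector bundles and then verify that both the rank and the determinant of the resulting class vanish. Since $\, X \,$ is regular Noetherian, $\, \F \,$ admits a finite locally free resolution
$$
0 \,\to\, E_n \,\to\, \cdots \,\to\, E_0 \,\to\, \F \,\to\, 0 \,,
$$
so $\, [\F] = \sum_i (-1)^i [E_i] \,$ in $\, K_0(X) \,$, and both $\, \rk[\F] \,$ and $\, \det[\F] = \bigotimes_i (\det E_i)^{\otimes (-1)^i} \,$ can be read off from this resolution.

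First I would dispose of the rank component. Set $\, U := X \setminus \supp \F \,$. Because $\, \supp \F \,$ has codimension $\, \geq 2 > 0 \,$, the open set $\, U \,$ contains every generic point of $\, X \,$, and $\, \F \,$ vanishes at each of them; hence $\, \rk [\F] = 0 \,$. For the determinant, I would restrict the resolution above to $\, U \,$. Since $\, \F\vert_U = 0 \,$, the restricted complex is an exact sequence of vector bundles, and so $\, \L := \det[\F] \,$ becomes canonically trivial in $\, \Pic U \,$.

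It then remains to deduce that $\, \L \,$ is already trivial in $\, \Pic X \,$, that is, that the pullback map $\, \Pic X \to \Pic U \,$ is injective whenever $\, X \setminus U \,$ has codimension $\, \geq 2 \,$ in the regular scheme $\, X \,$. This is the step I expect to be the crux of the argument: it is standard but does use regularity in an essential way. The argument is that on a regular Noetherian scheme, line bundles are classified by Cartier = Weil divisor classes, and any Weil divisor on $\, X \,$ is the schematic closure of its restriction to $\, U \,$, because every codimension-one point of $\, X \,$ lies in $\, U \,$. Injectivity follows, so $\, \L \cong \O_X \,$, and consequently $\, [\F] \in SK_0(X) \,$ as claimed.
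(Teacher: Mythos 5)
Your proof is correct and follows essentially the same line of reasoning as the paper: resolve $\F$ by a finite complex of vector bundles, observe that the rank vanishes, note that the determinant becomes trivial on the complement of $\supp \F$, and then use the codimension-$\geq 2$ hypothesis together with the divisor-theoretic description of line bundles on a regular scheme to conclude global triviality. The paper phrases the final step in terms of a rational section with divisor contained in $\supp\F$ being forced to be a unit, while you phrase it as injectivity of $\Pic X \to \Pic U$ via Weil divisor classes — but these are two formulations of the same argument.
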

\begin{proof}
Obviously, the rank of $\, \F \,$ is zero, so we have only to consider
its determinant.  Choose a resolution
$$
0 \to P_n \to \ldots \to P_1 \to P_0 \to \F \to 0
$$
of $\, \F \,$ by vector bundles $\, P_i \,$.  By definition, $\, \det \F \,$
is the alternating product of the determinant line bundles of the
$\, P_i \,$.  This line bundle is trivial off the support
of $\, \F $, that is, it has a rational section whose divisor of zeros and
poles is contained in the support of $\, \F \,$.
Since the support has codimension
greater than $1$, that is impossible unless the divisor of zeros and
poles is empty, that is, $\, \det \F \,$ is trivial, as claimed.
\end{proof}
\section{A lemma of Ginzburg on filtrations}
Let $ A $ be a positively filtered associative ring such that
$ \AO := \gr{A} $ is right Noetherian.
Let $ G_0(\AO) $ be the Grothendieck group of
finitely generated right $ \AO$-modules, and let
$ M $ be a finitely generated right $A$-module equipped with a good
filtration (recall that {\it good} means that the associated graded
$\, \AO$-module $\, \MO \,$ is finitely generated).  As usual, we write
$ [\,\MO\,] $ for the class of $ \MO $ in $ G_0(\AO) $.
The following observation is (apparently) due to Ginzburg (see \cite{G},
Corollary~1.3, p.~338).
\begin{lemma}
\label{L1}
$ [\,\MO \,] $ does not depend on the choice of good filtration on $M$.
\end{lemma}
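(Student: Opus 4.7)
The plan is to reduce the lemma to a standard computation in the Rees ring. I would first form the Rees ring $\tilde{A} := \bigoplus_{n \geq 0} F_n A$, graded by $n$, with distinguished central regular element $t \in \tilde{A}_1$ for which $\tilde{A}/t\tilde{A} \cong \AO$. Under the Noetherian hypothesis on $\AO$, the ring $\tilde{A}$ is also Noetherian, and I would observe that any good filtration $F_\bullet$ on $M$ gives rise to a finitely generated graded $t$-torsion-free $\tilde{A}$-module $\tilde{M}_F := \bigoplus_n F_n M$ with $\tilde{M}_F/t\tilde{M}_F \cong \gr_F M$ as $\AO$-modules.

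Given a second good filtration $F'_\bullet$, I would invoke the standard comparability of good filtrations to produce an integer $k$ with $F_n M \subseteq F'_{n+k} M$ and $F'_n M \subseteq F_{n+k} M$ for all $n$; this translates into a graded $\tilde{A}$-module inclusion $\tilde{M}_F \hookrightarrow \tilde{M}_{F'}(k)$ whose cokernel becomes trivial after inverting $t$. Replacing $F$ by the intersection $F \cap F'$ (also a good filtration), I may assume $F_n M \subseteq F'_n M$ pointwise, so $\tilde{M}_F \subseteq \tilde{M}_{F'}$ as graded submodules; then $Q := \tilde{M}_{F'}/\tilde{M}_F$ is a finitely generated $t$-torsion graded $\tilde{A}$-module. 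Reducing the short exact sequence $0 \to \tilde{M}_F \to \tilde{M}_{F'} \to Q \to 0$ modulo $t$ (using the $t$-torsion-freeness of $\tilde{M}_F$) would yield the four-term exact sequence
\begin{equation*}
0 \to \mathrm{Tor}_1^{\tilde{A}}(Q, \AO) \to \gr_F M \to \gr_{F'} M \to Q/tQ \to 0
\end{equation*}
of $\AO$-modules.

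It then suffices to show $[Q/tQ] = [\mathrm{Tor}_1^{\tilde{A}}(Q,\AO)]$ in $G_0(\AO)$. Since $Q$ is finitely generated and $t$-torsion over the Noetherian ring $\tilde{A}$, it is annihilated by $t^N$ for some $N$, and I would induct on $N$ along the filtration $Q \supseteq tQ \supseteq \ldots \supseteq t^N Q = 0$. In the base case where $tQ = 0$, the two-term Koszul resolution $0 \to \tilde{A}(-1) \stackrel{\cdot t}{\to} \tilde{A} \to \AO \to 0$ identifies $\mathrm{Tor}_1^{\tilde{A}}(Q,\AO) \cong Q$ and $Q \otimes_{\tilde{A}} \AO \cong Q$, so both classes equal $[Q]$ and cancel in $G_0(\AO)$. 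The main obstacle is the inductive step, which requires additivity of this cancellation along the chain of subquotients $t^i Q/t^{i+1} Q$; this reduces to a routine long exact sequence of $\mathrm{Tor}$'s, but it is the one place where bookkeeping is needed.
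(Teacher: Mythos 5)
Your argument is correct in outline, but it takes a genuinely different route from the paper. The paper's proof is entirely elementary: it introduces the ad hoc notion of \emph{adjacent} filtrations ($M_{k-1}\subseteq M'_k\subseteq M_k$), observes that for adjacent filtrations the two short exact sequences
$0\to\overline{L}\to\MO\to\overline{N}\to0$ and $0\to\overline{N}(-1)\to\MO'\to\overline{L}\to0$
force $[\,\MO\,]=[\,\MO'\,]$, and then connects any two good filtrations by the explicit finite chain $M^{(j)}_k:=M_k+M'_{k-j}$. No Rees ring, no Tor, no induction on nilpotency order. Your route is the standard Rees-module approach from the $\D$-module literature: pass to $\tilde A$, reduce to nested filtrations with quotient $Q$, derive the four-term sequence, and kill the $\delta(Q):=[\mathrm{Tor}_1^{\tilde A}(Q,\AO)]-[Q/tQ]$ obstruction by induction on $N$ with $t^NQ=0$. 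Both are sound; yours is more conceptual and plugs into general machinery, whereas the paper's is shorter and avoids having to know that $\tilde A$ is Noetherian and that $F\cap F'$ is again good (facts you invoke but should at least cite, as they rest on the Noetherianity of $\AO$).

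A few points where you should firm things up if you write this out in full. (1) The goodness of $F\cap F'$ is not automatic; it follows because $\tilde M_{F\cap F'}=\tilde M_F\cap\tilde M_{F'}$ is a submodule of the finitely generated module $\tilde M_{F'}$ over the Noetherian ring $\tilde A$ --- say this. (2) In your four-term sequence it is the $t$-torsion-freeness of $\tilde M_{F'}$ (the middle term), not $\tilde M_F$, that kills $\mathrm{Tor}_1(\tilde M_{F'},\AO)$; a small slip. (3) The ``routine bookkeeping'' in the inductive step does close: applying Tor to $0\to tQ\to Q\to Q/tQ\to0$ gives an alternating sum in $G_0(\AO)$, and substituting $\delta(tQ)=0$ (inductive hypothesis) and $\delta(Q/tQ)=0$ (base case) yields $\delta(Q)=0$. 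You should carry this out explicitly rather than waving at it, since it is the crux of the argument. (4) Remember that $\mathrm{Tor}_1^{\tilde A}(Q,\AO)$ in the base case is $Q(-1)$, not $Q$, as a \emph{graded} module; the equality $[Q(-1)]=[Q]$ holds only after forgetting the grading, which is also the one nontrivial step in the paper's proof (where $\overline N(-1)\cong\overline N$ as ungraded modules). It is worth flagging that both proofs rely on passing from graded to ungraded $G_0$ at exactly this point.
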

\begin{proof}
Call two filtrations $ M_{\bullet} $ and $ M_{\bullet}' $
on $ M $ {\it adjacent} if
$\, M_{k-1} \subseteq M_k' \subseteq M_k \,$ for all $ k \in \Z $. Given
a pair of adjacent
filtrations, set $ \overline{L} := \bigoplus_{k \in \Z} M_k'/M_{k-1} $ and
$ \overline{N} := \bigoplus_{k \in \Z} M_k/M_k' $. Then we have the obvious
exact sequences of (graded) $ \AO$-modules:
$$
0 \to \overline{L} \to \MO \to \overline{N} \to 0\qquad \mbox{and}
\qquad
0 \to \overline{N}(-1) \to \MO' \to \overline{L} \to 0 \ .
$$
Forgetting the grading, we have $ \overline{N}(-1) \cong \overline{N} $,
and therefore
\begin{equation}
\la{E0}
[\,\MO\,] = [\,\overline{L}\,] + [\,\overline{N}\,] =
[\,\overline{L}\,] + [\,\overline{N}(-1)\,] =
[\,\MO'\,]  \quad \mbox{in}\  G_{0}(\AO)\ .
\end{equation}
Now, for any filtrations $ M_{\bullet} $ and $ M_{\bullet}' $ on $M$,
define a sequence of adjacent filtrations
$ \{M^{(j)}_{\bullet}\,:\, j \in \Z \} $ by $\, M^{(j)}_{k} :=
M_k + M_{k-j}' \,$
so that $\, M^{(j)}_{k-1} \subseteq M^{(j+1)}_{k}
\subseteq M^{(j)}_{k} \,$ for all $ k, j \in \Z$.
If $ M_{\bullet} $ and $ M_{\bullet}' $ are both good, there is an
$\, n \in \N \,$ such that $\, M_{k-n}'
\subseteq M_{k} \subseteq M_{k+n}'\,$ for all
$ k \in \Z $ (see, for example, \cite{Bj}, Proposition~1.15, p.~458).
Thus, in the case of good filtrations we have
$\, M^{(j)}_{\bullet} = M_{\bullet}' \,$
for $\, j \leq -n \,$ and $\, M^{(j)}_{\bullet} = M_{\bullet} \,$ for
$\, j \geq n \,$,
so the lemma follows from \eqref{E0}.
\end{proof}

\bibliographystyle{amsalpha}

\end{document}